\definecolor{darkgreen}{rgb}{0,0.7,0}
\definecolor{darkblue}{rgb}{0,0,0.7}
\DeclareMathOperator*{\argmin}{argmin}
\newcommand{\R}{\mathbb{R}}
\newcommand{\D}{\overline{D}}
\newcommand{\N}{\mathbb{N}}
\newcommand{\Ss}{\mathcal{S}}
\renewcommand{\S}{\mathcal{S}}
\newcommand{\Xk}{X^{(k)}}
\newcommand{\Xone}{X^{(1)}}
\newcommand{\Xii}{X^{(i)}}
\newcommand{\Xj}{X^{(j)}}
\newcommand{\Ximone}{X^{(i-1)}}
\newcommand{\tauone}{\tau^{(1)}}
\newcommand{\taui}{\tau^{(i)}}
\newcommand{\tauj}{\tau^{(j)}}
\newcommand{\tauimone}{\tau^{(i-1)}}
\newcommand{\taujmone}{\tau^{(j-1)}}
\newcommand{\taukmone}{\tau^{(k-1)}}
\newcommand{\envelope}{(\raisebox{-.5pt}{\scalebox{1.45}{\Letter}}\kern-1.7pt)}
\DeclareMathOperator*{\kmin}{k-\,min}
\definecolor{mygreen}{rgb}{0.1,0.75,0.2}
\newcommand{\nc}{\normalcolor}
\newcommand{\Prob}{\mathbb{P}}
\newcommand{\gto}{\xrightarrow{\Gamma}}
            \newtheorem{thm}{Theorem}[section]
          \newtheorem{prop}[thm]{Proposition}
          \newtheorem{lem}[thm]{Lemma}
          \newtheorem{cor}[thm]{Corollary}
          \newtheorem{remark}[thm]{Remark}
          \newtheorem{defn}[thm]{Definition}
          \newtheorem{rem}[thm]{Remark}
\begin{document}
	   \title{Spatial extreme values: variational techniques and stochastic integrals}

          %For each author, make a block with the following four macros:
\author{Nicol\'as Garc\'ia Trillos\thanks{Department of Statistics, University of Wisconsin.} \and{} Ryan Murray\thanks{Department of Mathematics, Pennsylvania State University} \and{} Daniel Sanz-Alonso\thanks{Department of Statistics, University of Chicago.}}

          %Use \thanks statements for acknowedgements of grants and
          %support. They will appear below all the authors' addresses, so be
          %specific about which author is thanking whom:

          % Use the standard latex environments for theorems, etc. Here is one
          % possible method of declaring them: It numbers all results by the
          % section, and uses a common numbering system for the different
          % environmentts.

          %\date{Received date / Revised version date}
          % The correct dates will be entered by the editor

         \pagestyle{myheadings} \markboth{Title}{} \maketitle

          \begin{abstract}
This work employs variational techniques to revisit and expand the construction and analysis of extreme value processes. These techniques permit a novel study of spatial statistics of the location of minimizing events. We develop integral formulas for computing statistics of spatially-biased extremal events, 
and show that they are analogous to stochastic integrals in the setting of standard stochastic processes.
We also establish an asymptotic result in the spirit of the Fisher-Tippett-Gnedenko theory for a broader class of extremal events and discuss some applications of our results. 
          \end{abstract}

\section{Introduction}
Extreme value theory is the branch of probability theory and statistics concerned with the study of extreme deviations from the median behavior. A fundamental problem is to characterize the asymptotic distribution of
\begin{displaymath}
  M_n = a_n\min_{i=1 \dots n} (X_i-b_n),
\end{displaymath}
where the $X_i$ are i.i.d. random variables. A complete account of the possible limit distributions of this type of process is given by the Fisher-Tippett-Gnedenko theory (see e.g. \cite{resnick2013extreme}), which also explains the role of the normalizing sequences $
\{a_n\}$ and $\{b_n\}$. For instance if the $X_i$ are strictly positive with unit density at zero and  $a_n = n, \, b_n = 0$, then the $M_n$ converge in law to the exponential distribution with unit rate. This classic theory and some related stochastic processes will be reviewed in Subection \ref{sec:related-work}.
In many modern settings it is desirable to not only understand the distribution of minima, but also of the index (or ``spatial location'') of minimizers; this paper investigates some questions arising in such settings.
Suppose, as an illustration, that a user sends a request to a group of $n$ spatially-distributed servers with distinct locations $x_1, \dots, x_n \in D \subset \R^d$. The user and the parties running the servers will certainly be interested in waiting time for the first response to occur, but also will likely be interested in identifying the location of the first responding server. In order to model such a system, we will suppose that the response time of the servers is given by 
\begin{equation}
  f_n(x) := \begin{cases}  \frac{ n}{\lambda(x_i)}\xi_i + g(x_i) , &\text{ for } x = x_i, \quad  i = 1,\dots, n,\\ +\infty, &\text{ otherwise,}\end{cases}
\end{equation}
where $g(x_i)$ represents a deterministic communication latency (i.e. the amount of time it takes the message to reach the $i$-th server) that is \emph{not} necessarily spatially uniform, the $\xi_i$ are i.i.d. exponential variables, and the $\lambda(x_i)$ represents the processing speed of the server at location $x_i$ (which could be related to either hardware or workload). We have chosen the scaling $n$ for the random exponential term so that in the large $n$ limit it has comparable size to the deterministic latency. In a way that will be made precise, this scaling ensures that, for large $n,$ the overall processing rate of all servers is of order $1$ and given by the function $\lambda$. The latency imposes a relatively weak deterministic spatial correlation structure and models a scenario where the processing power needed for the request is high and the communication overhead is low. The locations $x_i$ of the servers will be assumed to be distributed according to a probability density $\rho$ in a bounded domain $D\subset \R^d$ so that, in general, the locations will not be uniformly distributed. 
In this framework, it is natural to study the distribution of the first response time, along with the distribution of the \emph{location} of the first server making a response. 

A first contribution of this paper is to provide integral representation formulas, which to our knowledge are novel, for the distribution of mins and argmins. These formulas may be of use in various inference problems, some of which will be outlined in Section \ref{sec:extensions}. We will be mostly interested in the limit problem as $n \to \infty$ and a second contribution is to introduce an appropriate analytical setting in which to study such a limit using modern tools from the calculus of variations \cite{DalMaso}. The response time functions $f_n$ will be viewed as  random variables with values in the space of lower semi-continuous functions $\mathcal{S}(D)$ (a precise definition of this space, and the associated topology of $\Gamma$-convergence, will be given in Section \ref{sec:topology}). Intuitively, $\mathcal{S}(D)$ is a natural choice of function space, as it permits the evaluation of minima and minimizers both in the discrete case with finite $n$ and in the limiting case, $n \to \infty$.  Moreover, the functions $f_n$ above are immediately lower semi-continuous, as they are defined to be $+\infty$ except at the $x_i$. Finally, a third contribution of this paper is to draw several analogies between extreme value processes and standard stochastic processes. We show that, if $g\equiv 0,$ the large $n$ distribution of mins and argmins of the response times $f_n$ is governed by the distribution of mins and argmins of a limiting extreme value process $W_\lambda$ that can be seen as an analog of Brownian motion. The process $W_\lambda$  has been studied before, but here our focus is on the spatial statistics of minimizers, which differs from much of the previous literature --see Subsection \ref{sec:related-work}. In the case of general $g$, evaluating local minima is analogous to computing stochastic integrals of $g,$ with extreme value processes playing the role of Brownian motion. 

The remainder of the introduction goes as follows. In Subsection \ref{ssec:Main theorems} we state our main results. We review related work in Subsection \ref{sec:related-work}, and applications and future lines of research in Subsection \ref{sec:extensions}. We close with an outline of the paper in \ref{ssec:outline}.

\subsection{Main results}\label{ssec:Main theorems}

The main results of this paper are concerned with the random process $W_\lambda$, which will serve as a limit of the process described above in the case $g\equiv 0$. Here and throughout $D$ denotes a bounded domain in $\R^d$, $\bar{D}$ its closure, and $\Ss(\overline{D})$ the space of lower semi-continuous function on $\overline{D}$ (see Section \ref{sec:prelim} for more details on this space).

\begin{defn}\label{def:W}
Let $\lambda : \overline{D} \rightarrow (0, \infty)$ be a continuous function. We define $W_\lambda$ to be the random object with values in $\Ss(\overline{D})$ satisfying the following properties:
	\begin{enumerate}
		\item For every  closed set $C$ the random variable $\min\limits_{x \in C\cap \overline{D}} W_\lambda(x)$ is exponentially distributed with rate
		\[ \lambda_C := \int_{C \cap \overline{D}} \lambda(x) dx.\]
		\item For any finite collection of disjoint and closed sets $C_1, \dots, C_k$, the random variables 
		\[ \min_{x \in C_1} W_\lambda(x) , \dots, \min_{x \in C_k} W_\lambda(x) \]
		are independent. 
	\end{enumerate}
\end{defn}
We will show in Propositions \ref{prop:W-unique} and \ref{ConvergenceGamma} that in fact the random process $W_\lambda$ exists and is well defined. Furthermore, we will show in Proposition \ref{prop:kmins} that after adding a deterministic function $g$ to $W_\lambda$, the resulting random function has well-defined, unique  $k$-argmins (as defined in Definition \ref{def:k-mins}). Our first main result establishes a representation formula for the joint distribution of the first $k$-argmins. 

\begin{thm}
\label{thm:main}
Let $g \in \Ss(\overline{D}) $ (see Section \ref{sec:topology}) and let $W_\lambda$ be as in Definition \ref{def:W}. Then, (with probability 1) the random function $W_\lambda + g$ has well-defined, unique $k$-argmins for every $k \in \N$ (see Definition \ref{def:k-mins}). In addition, the joint distribution of the first $k$-argmins $(\Xone, \dots, \Xk)$ is given by the density function:
\begin{align}
\label{eqn:argmin-dist}
\begin{split}
  \frac{\Prob(\Xone \in dx_1, \dots, \Xk \in dx_k )}{dx_1 \dots dx_k} = &\int_{\R^{k-1}} \Phi( x_k,(g-r_{k-1})^+) \Psi(x_1,r_1,g) \\
  &\times \prod_{j=2}^{k-1} \Psi(x_j,r_j-r_{j-1},(g-r_{j-1})^+) \,dr_1\dots\,dr_{k-1},
  \end{split}
\end{align}
with
\begin{align}
\begin{split}
  \Phi(z,h) &:= \int_{h(z)}^\infty  \exp\left( -\bar \lambda\int_{-\infty}^t H_{\lambda,h}(s) \,ds \right)\,dt,\\
  \Psi(z,t,h) &:= \chi_{t > h(z)} \exp\left( -\bar \lambda\int_{-\infty}^t H_{\lambda,h}(s) \,ds \right),\nc\\
  H_{\lambda,h}(s) &:= \mu_\lambda(\{x \in D : h(x) \leq s\}),\label{eqn:H-def} \\
  \bar\lambda &:= \int_D \lambda(x) \,dx, \\
  \mu_\lambda(E) &:= \frac{1}{\bar \lambda}\int_E \lambda(x) \,dx.
  \end{split}
\end{align}
\end{thm}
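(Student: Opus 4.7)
The strategy is to realize $W_\lambda$ via a Poisson point process (PPP) and then read off the joint density of the $k$-argmins from the standard formula for the density of the first $k$ ordered arrivals. The exponential-rate axiom together with the independence over disjoint closed sets in Definition \ref{def:W} match exactly the distribution of the projection of a PPP $\Pi$ on $\overline{D}\times(0,\infty)$ with intensity measure $\lambda(x)\,dx\,dt$, since $\Prob(\min_C W_\lambda > t) = e^{-\lambda_C t} = \Prob(\Pi\cap(C\times(0,t])=\emptyset)$. Morally, $W_\lambda$ is $+\infty$ except at the atoms of $\Pi$, so that the $k$-argmins of $W_\lambda+g$ are the spatial coordinates of the first $k$ atoms of $\Pi$ once reordered by the shifted coordinate $u:=t+g(x)$.

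Pushing $\Pi$ forward through $(x,t)\mapsto(x,u)$ produces a PPP $\tilde\Pi$ supported on the super-graph $\{(x,u):u>g(x)\}$ with intensity $\lambda(x)\,dx\,du$, and the $k$-argmins become the spatial coordinates of the first $k$ atoms of $\tilde\Pi$ in the natural order on $u$. I would then apply the standard ordered-arrival density $\prod_{j=1}^k \lambda(y_j)\mathbf{1}_{u_j>g(y_j)}\,\exp\bigl(-\int_D\lambda(y)(u_k-g(y))^+\,dy\bigr)\mathbf{1}_{u_1<\cdots<u_k}$ and integrate out the values $u_1,\ldots,u_k$ to obtain the joint density of $(X^{(1)},\ldots,X^{(k)})$.

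To bring this into the factorized form \eqref{eqn:argmin-dist}, the key is to telescope the single exponential using the algebraic identity $\bigl((r_j-r_{j-1})-(g-r_{j-1})^+(y)\bigr)^+ = (r_j-g(y))^+ - (r_{j-1}-g(y))^+$, valid for $r_{j-1}<r_j$ and easily verified by case-splitting on the position of $g(y)$ relative to $r_{j-1}$ and $r_j$. Each increment rewrites (via Fubini) as $\bar\lambda\int_{-\infty}^{r_j-r_{j-1}} H_{\lambda,(g-r_{j-1})^+}(s)\,ds$, so exponentiating the sum of increments produces the product of $\Psi$ factors, while the integral over the unconstrained final variable $u_k$ (reparameterized by $t=u_k-r_{k-1}$) yields the $\Phi$ factor.

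The main obstacle will be justifying the PPP identification and the ``strip off a minimum and restart'' argument rigorously within the topology on $\Ss(\overline{D})$, since $W_\lambda$ is specified only through the distributions of its closed-set minima and not pointwise. One needs, conditional on $(X^{(1)},\ldots,X^{(j-1)})$ and on their shifted levels $r_1<\cdots<r_{j-1}$, that the remaining process is an extreme-value process of the same form restricted to the region $\{u>r_{j-1}\}$; the existence and uniqueness of $k$-argmins from Proposition \ref{prop:kmins} is precisely what makes this iterative exposure well-defined almost surely, letting the memorylessness of the exponential and the Markov property of Poisson processes do the work. A minor additional technicality is ensuring the argument still works when $g$ takes the value $+\infty$, which is harmless since those points contribute no atoms to $\tilde\Pi$.
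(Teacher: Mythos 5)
Your proof takes a genuinely different route from the paper's. The paper first proves the $k=1$ case (Lemma~\ref{thm:main1}) by an explicit limit computation on finite-sample approximants $\bar f_n$, invoking $\Gamma$-convergence (Propositions~\ref{prop:mainGamma} and~\ref{ConvergenceGamma}) to pass to the limit, and then reaches general $k$ by iterating through a Markov-type conditioning identity, Bayes' rule, and the memorylessness of the exponential. You instead realize $W_\lambda$ once and for all as the inf-derivative of a Poisson point process on $\overline{D}\times(0,\infty)$ with intensity $\lambda(x)\,dx\,dt$ (with uniqueness, Proposition~\ref{prop:W-unique}, making the identification rigorous, as you note), push the process forward to the supergraph of $g$, and read off the joint law of the first $k$ ordered atoms from the standard Poisson formula; the memorylessness and the ``strip-and-restart'' iteration are then subsumed into the PPP structure rather than invoked step by step. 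Your telescoping identity $((r_j-r_{j-1})-(g-r_{j-1})^+)^+=(r_j-g)^+-(r_{j-1}-g)^+$ for $r_{j-1}<r_j$ is correct and is exactly what converts the single exponential in the Poisson formula into the product of $\Psi$ factors. This is arguably cleaner than the paper's iteration; the cost is setting up the PPP realization, which the paper has in fact half-built already in Proposition~\ref{prop:ConvergenceGamma-alt}.

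One discrepancy you should have flagged: the ordered-arrival density you write carries the prefactor $\prod_{j=1}^k\lambda(x_j)$, and these factors do not cancel under the telescoping rewrite — they simply ride along — yet they are absent from \eqref{eqn:argmin-dist} and from $\Psi$, $\Phi$ as the paper defines them. So your computation does not actually ``bring this into the factorized form \eqref{eqn:argmin-dist}''; it produces \eqref{eqn:argmin-dist} with each $\Psi(x_j,\cdot,\cdot)$ replaced by $\lambda(x_j)\Psi(x_j,\cdot,\cdot)$ and $\Phi(x_k,\cdot)$ by $\lambda(x_k)\Phi(x_k,\cdot)$. A normalization check shows your version is the correct one: for $k=1$,
\[
\int_D \lambda(x)\,\Phi(x,g)\,dx \;=\; \int_{\R} \bar\lambda\, H_{\lambda,g}(t)\,\exp\!\left(-\bar\lambda\int_{-\infty}^{t} H_{\lambda,g}(s)\,ds\right)dt \;=\; 1,
\]
whereas $\int_D\Phi(x,g)\,dx\neq1$ when $\lambda$ is non-constant. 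In the paper's discrete derivation the Jacobian $\bar\lambda/n$ from the density of $\xi_j$ at $r\bar\lambda/n$ and the local sample-point density $\approx n\lambda(x_j)/\bar\lambda$ are both dropped when converting the discrete mass at $x_j$ into a Lebesgue density, and their product is exactly the missing $\lambda(x_j)$. When writing up your argument, state the corrected formula explicitly rather than asserting agreement with \eqref{eqn:argmin-dist} as printed.
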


Our second main result establishes that $W_\lambda + g$ serves as an appropriate limiting process for a wide class of discrete processes, including the $f_n$ above.  
\begin{thm}
\label{thm:main2}
Suppose that  $x_1, \dots, x_n, \dots$ is a sequence of points in $D$ for which the empirical measures 
\[  \frac{1}{n}\sum_{i=1}^n \delta_{x_i} \]
converge weakly, as $n \rightarrow \infty$, towards the measure $\rho(x) dx$, where $\rho: D \rightarrow \R$ is a continuous density function. Let $\lambda: \overline{D} \rightarrow \R$ be a continuous function and let $g: \overline{D}\rightarrow \R$. Let  $\xi_1, \dots, \xi_n, \dots$  be independent, non-negative random variables (not necessarily identically distributed) whose cumulative distribution functions $F_i$ satisfy 
\begin{equation} F_i(t) = t+ O(t^2) \label{eqn:first-order-dist}    \end{equation}
near zero (uniformly in $i$). Define the random function $f_n$ by
\[  f_n(x) = \begin{cases}  \frac{n}{\lambda(x_i)} \xi_i  + g(x_i), & \text{if } x=x_i , \quad i=1, \dots, n,\\ +\infty, & \text{otherwise}.  \end{cases}  \]
Then, the joint distribution of the first $k$ argmins of $f_n$ converges, as $n \rightarrow \infty$, towards the joint distribution of the first $k$ argmins of $W_{\tilde{\lambda}}+g$, where
\[ \tilde{\lambda}(x) = \lambda(x) \rho(x), \quad x \in D. \]
\end{thm}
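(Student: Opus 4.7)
The strategy I would follow is to establish convergence in distribution of the random lsc functions $f_n$ to $W_{\tilde{\lambda}}+g$ in $\Ss(\overline{D})$ equipped with the $\Gamma$-convergence topology from Section \ref{sec:topology}, and then extract convergence of the joint law of $k$-argmins via a continuous-mapping argument based on Proposition \ref{prop:kmins}.

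\textbf{Step 1: Finite-dimensional distributions of minima.} First I would test against the defining characterization of $W_{\tilde{\lambda}}$ in Definition \ref{def:W}. Pick any finite collection $C_1,\dots,C_m$ of disjoint closed subsets of $\overline{D}$, and consider the random vector $\bigl(\min_{x\in C_j}(f_n(x)-g(x))\bigr)_{j=1}^m$. Because the $C_j$ are disjoint, each coordinate depends only on the block of $\xi_i$'s with $x_i\in C_j$, so the coordinates are mutually independent for every $n$. The marginals are
\[ \Prob\Bigl(\min_{x\in C_j}(f_n(x)-g(x))>t\Bigr)=\prod_{i:\,x_i\in C_j}\bigl(1-F_i(\lambda(x_i)t/n)\bigr), \]
and the hypothesis $F_i(s)=s+\bigO{s^2}$ uniformly in $i$, together with $\log(1-s)=-s+\bigO{s^2}$, gives the expansion $-(t/n)\sum_{i:\,x_i\in C_j}\lambda(x_i)+\bigO{1/n}$. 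Weak convergence of the empirical measures (and continuity of $\lambda$, $\rho$, modulo $\rho$-null boundaries of $C_j$) then forces this quantity to tend to $-t\int_{C_j}\lambda(x)\rho(x)\,dx=-t\,\tilde{\lambda}_{C_j}$. Thus the joint distribution of the minima converges to that of independent exponentials with rates $\tilde{\lambda}_{C_j}$, matching the two defining properties of $W_{\tilde{\lambda}}$.

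\textbf{Step 2: Lifting to convergence in $\tau_\Gamma$ and continuity of argmins.} Next I would lift the finite-dimensional convergence from Step 1 to weak convergence of $f_n-g$ to $W_{\tilde{\lambda}}$ in $(\Ss(\overline{D}),\tau_\Gamma)$. This is precisely the kind of statement I expect to be packaged by the machinery developed in Section \ref{sec:topology} and already used in the proof of Proposition \ref{ConvergenceGamma}: convergence of the joint laws of minima over disjoint closed sets, combined with a tightness/compactness argument in the $\Gamma$-topology, is enough to pin down weak convergence in $\Ss(\overline{D})$. Since pointwise addition of the fixed lsc function $g$ is continuous for $\tau_\Gamma$, I obtain $f_n\Rightarrow W_{\tilde{\lambda}}+g$ in distribution. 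A standard fact in $\Gamma$-convergence is that the $k$-argmin map is continuous at lsc functions whose first $k$-argmins exist and are unique and isolated; by Proposition \ref{prop:kmins} this holds almost surely for $W_{\tilde{\lambda}}+g$. The continuous mapping theorem then delivers the claimed convergence of the joint laws of $(\Xone,\dots,\Xk)$.

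\textbf{Main obstacle.} The hard part will be Step 2: convergence of minima over closed sets is an intrinsically one-sided piece of information, so promoting it to weak convergence in $\tau_\Gamma$ demands a portmanteau/tightness argument that combines the Poisson-type independence across disjoint closed sets enjoyed by $W_{\tilde{\lambda}}$ with the continuity of $\lambda$, $\rho$ and $g$ (and care at boundaries $\partial C_j$, to avoid one-sided inequalities in the weak-convergence step). I would rely heavily on the $\Gamma$-topology framework developed earlier in the paper, and expect that most of the analytic heavy lifting is already embedded in Proposition \ref{ConvergenceGamma}.
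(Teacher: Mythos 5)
Your Step 1 is correct and matches the finite-dimensional computation the paper performs: the product form together with $F_i(s) = s + \bigO{s^2}$ and $\log(1-s) = -s + \bigO{s^2}$, combined with weak convergence of the empirical measures (with the usual care about $\rho$-null boundaries of the $C_j$), yields convergence of the joint law of minima over disjoint closed sets to independent exponentials with the rates $\tilde\lambda_{C_j}$. Combined with the tightness/Prokhorov argument already used in Proposition \ref{ConvergenceGamma} and the uniqueness result of Proposition \ref{prop:W-unique}, this does establish weak convergence of $f_n$ to $W_{\tilde\lambda}+g$ in $\calp(\Ss(\overline D))$, which is also where the paper ends up after its shortcut through the classical one-variable claim $n\min\{\xi_1,\dots,\xi_n\}\to^d\exp(1)$.

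The genuine gap is in Step 2, in the sentence claiming that ``the $k$-argmin map is continuous at lsc functions whose first $k$-argmins exist and are unique and isolated.'' For $k>1$ this is false in the $\Gamma$-topology. Take any $f$ with singletons $M_1(f)=\{x^1\}$, $M_2(f)=\{x^2\}$, $x^1\neq x^2$, and perturb it at a single point $y_n\to x^1$, $y_n\neq x^1$, by setting $f_n(y_n)=f(x^1)+\e_n$ with $\e_n\downarrow 0$ and $f_n=f$ elsewhere; then $f_n\gto f$ but $M_2(f_n)=\{y_n\}\to\{x^1\}\neq M_2(f)$. So the $2$-argmin map is discontinuous at \emph{every} lsc function with a unique second argmin, and the continuous-mapping theorem cannot be invoked. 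The paper makes exactly this point in the remark following the proof of Theorem \ref{thm:main2}: convergence in $\calp(\Ss)$ does \emph{not} in general give convergence of $M_k$ for $k>1$, because small-scale features can collapse onto lower-order argmins. What the paper needs, and you are missing, is Lemma \ref{lem:kth}: convergence of $k$-argmins along a sequence $f_n\gto f$ requires the \emph{quantitative separation hypothesis} \eqref{eqn:beta} on the argmins of the approximating $f_n$ (not merely isolation of argmins of the limit). For the discrete $f_n$ of Theorem \ref{thm:main2} this is salvageable because the support points are finitely many, so the argument passes through an almost-sure coupling (as in the proofs of Lemma \ref{thm:main1} and Proposition \ref{prop:kmins}, via a construction like Proposition \ref{ConvergenceGamma}) in which the argmins of $f_n$ stay locally isolated and one can apply Lemma \ref{lem:kth} pathwise. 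A purely distributional continuous-mapping argument, as you propose, does not reach the case $k\ge 2$. (A smaller issue: addition of a fixed $g$ is continuous in $\tau_\Gamma$ only when $g$ is continuous, not merely lsc; this matters exactly when one tries to transport $\Gamma$-convergence from $f_n-g$ to $f_n$.)
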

The proofs of Theorems \ref{thm:main} and \ref{thm:main2} will be given in Section \ref{sec:main-proofs}.
An immediate corollary of these theorems is the following:
\begin{cor}
  In the setting of Theorem \ref{thm:main2}, the joint distribution of the first $k$ argmins of $f_n$ converges, as $n \rightarrow \infty$, towards the distribution given by \eqref{eqn:argmin-dist} with $\lambda$ replaced by  $\tilde{\lambda}(x) = \lambda(x) \rho(x).$
\label{cor:main}
\end{cor}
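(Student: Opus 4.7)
The plan is to obtain the corollary as a direct composition of the two preceding theorems. First, I would apply Theorem \ref{thm:main2} to identify the weak limit of the joint law of the first $k$ argmins $(\Xone,\dots,\Xk)$ of $f_n$ with the joint law of the first $k$ argmins of the limit process $W_{\tilde\lambda}+g$, where $\tilde\lambda(x)=\lambda(x)\rho(x)$. I would then apply Theorem \ref{thm:main} with $\tilde\lambda$ in place of $\lambda$ to exhibit an explicit density for this limiting joint law, namely the formula \eqref{eqn:argmin-dist} in which $\lambda$, $\bar\lambda$, $\mu_\lambda$, and $H_{\lambda,h}$ are replaced throughout by their $\tilde\lambda$ analogues. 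Chaining these two facts yields exactly the statement of the corollary.

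The one matter to verify is that the hypotheses of Definition \ref{def:W} continue to hold with $\tilde\lambda$ in place of $\lambda$: continuity on $\overline{D}$ and strict positivity. Continuity of $\tilde\lambda=\lambda\rho$ is immediate from continuity of $\lambda$ and $\rho$, both of which are assumed in Theorem \ref{thm:main2}. Strict positivity follows provided $\rho>0$ on $\overline{D}$; if $\rho$ is allowed to vanish on part of $\overline{D}$, I would handle this by restricting the ambient domain to the (relatively open) support of $\rho$ and noting that the empirical measures $\tfrac{1}{n}\sum_{i=1}^n \delta_{x_i}$ place asymptotically vanishing mass outside this support, so that the first $k$ argmins of $f_n$ must eventually lie inside it with high probability.

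No step here is a genuine obstacle: the corollary is essentially the substitution $\lambda\mapsto\lambda\rho$ at the last step of the proof pipeline. All the substantive analytic content — existence and uniqueness of $W_\lambda$, the derivation of the density formula \eqref{eqn:argmin-dist}, and the $\Gamma$-type convergence of $f_n$ to $W_{\tilde\lambda}+g$ — is already packaged inside Theorems \ref{thm:main} and \ref{thm:main2}, and the proof reduces to composing them.
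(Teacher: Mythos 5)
Your proposal matches the paper's intent exactly: the paper presents the corollary as an immediate consequence of Theorems \ref{thm:main} and \ref{thm:main2}, obtained precisely by chaining them together with $\lambda$ replaced by $\tilde\lambda = \lambda\rho$. Your extra observation about checking that $\tilde\lambda$ remains continuous and strictly positive (so that Definition \ref{def:W} applies) is a sensible bit of due diligence that the paper leaves implicit, but it does not change the argument.
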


\begin{remark}\label{rem:alpha-stable-analog}
  In most  proofs below the random variables $\xi_i$ are assumed to be exponentially distributed with unit rate and the points $x_i$ to be uniformly distributed. This assumption can be made without loss of generality, as the rates of the $\xi_i$ and fluctuations in $\rho$ may be absorbed into $\lambda$. Moreover, the assumption that the $\xi_i$ are exponential may be relaxed in the limit as $n \to \infty$ (see e.g. Theorem \ref{thm:main2}). Second, the Fisher-Tippett-Gnedenko theory provides a broader class of possible limiting distributions (analogous to $\alpha$-stable distributions in the classical central limit theory), which would not require such a specific behavior of the $F_i$ near zero. We believe that it is of interest to extend our results to that broader family, but such an extension is beyond the scope of this work.
\end{remark}

\subsection{Related work}\label{sec:related-work}
Extreme value theory and extremal processes is a classical branch of probability and statistics. While we do not attempt to give an exhaustive account of the field, we will highlight some issues that are relevant to our work. We will also draw parallels with other limit theorems and stochastic processes.

One of the first questions in probability theory is to understand the limits of combinations of independent random variables. In the context of the central limit theorem, one considers
\begin{displaymath}
 \lim_{n \to \infty} {n}^{-1/2} \left(\sum_{i=1}^n X_i -  n \mathbb{E}(X_i)\right).
\end{displaymath}
The central limit theorem states that, for i.i.d. $X_i$ with finite variance, the limit is a normally-distributed random variable. In the context of extreme values, given a sequence $\{X_i\}$ of i.i.d. variables one similarly considers 
\begin{displaymath}
  \lim_{n \to \infty} a_n\left(\min_{i=1\dots n} X_i - b_n\right).
\end{displaymath}
Here $a_n$ and $b_n$ are normalizing constants, analogous to $n^{-1/2}$ and $n \mathbb{E}(X_i)$ in the central limit theorem case. The Fisher-Tippett-Gnedenko Theorem \cite{Gnedenko} completely specifies possible limits for this process. A detailed description of this theory can be found in \cite{Leadbetter} or \cite{resnick2013extreme}. In this paper we focus on the case where $X_i$ is positive, and has a density function which is positive at zero.

Subsequently, various authors \cite{Dwass1964} \cite{Lamperti} studied the distribution of the ``k-records''
\begin{displaymath}
  M_n^k = a_n\left(\kmin_{i=1\dots n} X_i - b_n\right),
\end{displaymath}
where $\kmin\limits_{i=1\dots n}$ denotes the value of the $k$-th smallest element in the collection. Studying $k$-th mins is an important branch of extreme value theory that we pursue and extend here by characterizing the spatial distribution of the $k$-th minima. 

Returning to the discussion of first mins, a natural object is the rescaled process 
\begin{displaymath}
  M_n(t) = a_n\left(\min_{i < n t} X_i - b_n\right).
\end{displaymath}
that tracks mins over time. Taking the limit $\lim_{n \to \infty} M_n(t) =: M(t)$ yields so-called \emph{extremal processes} \cite{resnick2013extreme}. The construction of such processes is completely analogous to the construction of Brownian motion by L\'evy, where sums are replaced by mins. 

In standard stochastic processes, a central concept is the family of \emph{ stable stochastic processes}, which are invariant under linear combinations. In the context of extreme values one instead considers the family of \emph{max-stable processes}, which are stochastic processes that satisfy
\begin{displaymath}
  \max_{i=1\dots r} M^{(i)}(t) \stackrel{d}{=} \, r M(t),
\end{displaymath}
where the $M^{(i)}$ are independent copies of $M$ and $\stackrel{d}{=}$ denotes equality in distribution. A significant literature studies these processes by means of Poisson processes on the plane \cite{Pickands71}, and gives a spectral representation of these processes \cite{deHaan-1984-Spectral}. Min-stable processes (namely processes $M_t$ for which $\frac{1}{M_t}$ is max-stable) were studied in \cite{deHaan-Pickands}. Again, much of this literature focuses on the distribution of minima and not on the locations at which minima occur.

The previous objects can be easily generalized by evaluating the minimum value over some Borel set $A \subset \R$,
\begin{displaymath}
  M_n(A) = a_n \left(\min_{\frac{i}{n} \in A} X_i - b_n\right).
\end{displaymath}
Note that letting $A=[0,t)$ recovers the previously-defined process $M_n(t)$. Again,  one may then consider the limit
  \begin{displaymath}
    M(A) := \lim_{n \to \infty} M_n(A).
  \end{displaymath}
The set functions $M_n$,  $M$ are called \emph{inf-measures}. In the context of maximization the analog is known as a \emph{sup-measure}. 

In this paper we work at the level of lower semi-continuous functions, that is, we study directly the functions $\lim_{n \to \infty} n X_{\frac{i}{n}}$ as opposed to the minimum value that they take. This seems more natural here, as it allows us to study simultaneously both minimizers and minima. The approach via lower semi-continuous functions can be shown to be equivalent to that of inf-measures. Namely, given a inf-measure $M,$ we can define the \emph{inf-derivative} of that measure as
\begin{displaymath}
  d^{\wedge}M (x) := \inf_{G : x \in G} M(G),
\end{displaymath}
where $G$ is allowed to vary across Borel sets. Here $d^{\wedge} M$ will be a lower semi-continuous function. Similarly, given a lower semi-continuous function $f$ we can define the \emph{inf-integral} of the function via
\begin{displaymath}
  f^{\wedge} (G) := \inf_{x \in G} f(x).
\end{displaymath}
Again, $G$ here is any Borel set. It can be shown that $f^{\wedge}$ is then an inf-measure. Hence one can develop the theory either in terms of inf-measures or in terms of lower semi-continuous functions.

A mathematically-sophisticated development of sup-measures, as well as a detailed account of connections with probability, optimization and analysis literature, is given in the excellent article \cite{VervaatTopology}. Unfortunately, Vervaat passed away before his work was published, and hence his work is only published somewhat obscurely.

The papers \cite{Resnick-Roy-1991} and \cite{Resnick-Roy-1992} extend and apply many of the ideas in \cite{VervaatTopology} to construct general random upper semi-continuous functions. These works are closely related to ours in that they construct a version of the process $W_\lambda$. However, they are strongly connected to the framework of choice optimization.

Various authors have addressed spatial effects in different contexts. For example, \cite{deHaan-Lin} studies the probability that the maximum of some sequence of random functions $\xi_i(t)$ exceeds a deterministic function $f$, with $t \in [0,1]$. This is similar to our framework in that they permit spatially-inhomogeneous shifts, but they do not study the distribution of the location of extremal events. Statistical estimation of spatial extremes in the context of max-stable processes has been used to study various geophysical processes \cite{Smith-1990}\cite{Davidson-2012}. These works focus primarily on using spectral representations of max stable processes to tackle challenging spatial statistical problems. 

\subsubsection{Analogue with stochastic integrals}

The discussion above suggests that for most of the basic developments of classical stochastic processes there is an analogue in terms of extreme values. In all cases the main difference is that taking sums of some underlying variables is replaced by taking the minima. In this light, one can see the theory of extreme values as a version of stochastic processes where the algebraic operator ``$+$'' is replaced with ``$\min$''. Extending this analogy, a family of algebraic operations similar to the ring $(+,\times)$, may be obtained by considering the operations $(\min, +)$. % By conducting some computations, one can construct an analog of the ring $(+,\times)$ with the operators $(\min, +)$.
The resulting algebraic structure\footnote{Technically this is a semi-ring and not a proper ring, because the $\min$ operator is not invertible. However, this will not be important for our purposes.} $(\min, +)$ is known as a \emph{tropical algebra}.

Our discussion of extremal processes above  has not made use of ``multiplication'' of processes. However, this is critical for the construction of stochastic integrals. Recall that with classical Brownian motion one defines
\begin{displaymath}
  \int_0^t H dB = \lim_{n\to \infty} \sum_{i=1}^n H_{t_{i-1}} (B_{t_i} - B_{t_{i-1}}),
\end{displaymath}
where $t_i$ describe partitions of $[0,t]$.

Since in studying extreme values we are replacing sums with mins and multiplication with addition, we thus ought to define our extremal stochastic integrals via
\begin{displaymath}
  ``\int_0^t g dW_\lambda "= \lim_{n \to \infty} \min_{i < n t} g\left(\frac{i}{n}\right) + n  X_{\frac{i}{n}}.
  \end{displaymath}
In this paper we study these processes and  give explicit formulae for the distribution of their first $k$ mins and argmins.

%Outline:
%\begin{enumerate}
%  \item Outline the min-plus algebra briefly. Highlight how almost everything in the historical section was an analog of classical CLT and Brownian motion in the context of this algebra.
%\item Talk about definition of stochastic integrals. How this is given as a limit of sums multiplied by a function
%\item The analog in our setting is the min of our process with functions added.
%\item Note that $W_\lambda+g$ is exactly that setting. This does not fall within the framework of max-stable processes.
%\end{enumerate}
%
%

\subsection{Applications and extensions}\label{sec:extensions}
There are several extensions and special cases of interest that can be derived from our main results. In this section we briefly describe a few promising examples.

\subsubsection{Bayesian estimation via $k$-argmins}

Suppose that the server problem in the introduction is repeated many times. That is, one sequentially sends requests to servers, and observes which were the first $k$ servers to respond, as well as their response times. One can then rightfully ask: can we infer the functions $g$ and $\lambda$?

This question naturally fits a non-parametric  \emph{Bayesian} framework in which the functions $g$ and $\lambda$ are assumed to be unknown, and to be distributed according to a \textit{prior} measure over functions. The explicit formulas developed in this paper would be nothing but the likelihood of the observations given the unknown functions $g$, $\lambda$.  With these formulas at hand, a Markov chain Monte Carlo algorithm could be used to estimate expectations under the \textit{posterior} distribution of unknowns given observations. In this way we could make predictions about where the location of the next fastest server will be and quantify how confident we are about our prediction.

\subsubsection{Dependence of rates and delays on density }
Suppose the points $x_1, \dots, x_n$ are distributed according to the density $\rho(x) dx$. There may be applications where the rate function $\lambda$ and the latency function $g$ depend on $x$ only through the density $\rho$ of points around $x$ . In particular, we may consider a model of the form
\[ \lambda(x) := \Lambda (\rho(x) ), \quad g(x):= G(\rho(x)), \]
where $\Lambda$ and $G$ are scalar functions. For example, we may imagine a parametric model of the form
\[ \Lambda(t) = t^{\alpha} , \quad G(t) = t^{\beta}, \]
where $\alpha$ and $\beta$ are real numbers. Estimating the functions $\lambda$ and $g$ then reduces to learning the parameters $\alpha$ and $\beta$. Estimating these parameters may provide valuable qualitative, as well as quantitative, information about the structure of latency and rate patterns due to high or low concentration of servers.

\subsubsection{Extension to weakly-correlated response times}
The response times  $f_n(x_1), \dots, f_n(x_n)$ \nc in Theorem \ref{thm:main2} were assumed to be independent. Nevertheless, it is possible to extend our results to \textit{weakly-correlated} processing times, allowing spatially correlated perturbations. Precisely, one could consider
=
\[  f_n \nc (x_i) = \frac{n}{\lambda(x_i)}\xi_i + g(x_i)   \]
\nc
where $g$ is a random field independent from $\{ \xi_1, \dots, \xi_n\}$ for which, with probability one, 
\[ || g||_\infty < \infty, \]
and where the $\xi_i$ are independent. In this context, to obtain the asymptotic distribution of the first $k$ argmins $x^{(1)}, \dots,x^{(k)}$ of $f_n$ \nc we may use the independence lemma to first obtain the asymptotic distribution of $(x^{(1)}, \dots, x^{(k)})| g $ using Corollary \ref{cor:main} and then integrate with respect to the distribution of $g$. The latency function $g$ could be modeled according to a Gaussian random field.

\subsubsection{Extensions to network structures}
In the setting of server responses, it may be more accurate to consider a graph structure as opposed to an Euclidean one. Thus, consider a very large graph $G_n$ with vertex set $V_n$. Suppose that to each node $v$ in the graph we associate a server which can finish a task in time $f_v$. It is natural to ask whether one can learn the latency, rate, and weak correlation structure of processing times in such a large graph.

A possible practical approach to answer this question is to use an embedding of the set of nodes $V_n$ into Euclidean space. For example, we can consider a spectral map $\Theta : V_n \rightarrow \R^d$ constructed using the graph Laplacian  and then imagine that the servers are actually located at the ``geographic'' locations $\Theta(v_1), \dots, \Theta(v_n)$. One could then use the Bayesian inference methods described above to estimate the desired quantities. The idea of using the map $\Theta$ is that points $v, \tilde{v}$ are ``close'' when the points $\Theta(v), \Theta(\tilde{v})$ are close to each other; in this way we translate the non-geographic information contained in $G_n$ into geographic information which fits the set-up considered in this paper. Specific applications of these ideas are to be explored in the future.

\subsection{Outline}\label{ssec:outline}

The remainder of the paper is organized as follows. In Section \ref{sec:prelim} we collect some background results and we establish some properties of the process $W_\lambda.$   The main results are proved in Section \ref{sec:main-proofs}. We conclude in Section \ref{sec:comp-ex} with a short illustrative example.

\section{Preliminaries}\label{sec:prelim}
This section contains background results that will be employed in the proof of our main theorems. Subsection \ref{sec:topology} describes the function space $\mathcal{S}(\overline{D})$ of lower semicontinuous functions endowed with a suitable topology.  Subsection \ref{sec:W-props} establishes some properties and constructions of the process $W_\lambda$.

\subsection{The space $\S(\overline{D})$ and the  topology of $\Gamma$-convergence}\label{sec:topology}
Our goal here is to introduce the topology of $\Gamma$-convergence on the space of lower semi-continuous functions. This topology generates a notion of convergence which preserves the structure of minima and minimizers. $\Gamma$-convergence is also known as epi-convergence in the probability literature \cite{VervaatTopology}. This topology has found application in many fields, e.g. materials science, Ginzburg-Landau theory, and image processing. We will follow the presentation in \cite{DalMaso}, Chapter 10.

For any metric space $X$ we define $\S(X)$ to be the family of lower semi-continuous functions on $X$ taking values in $\R\cup \{\infty\}$. The following definition is standard:

\begin{defn}
  Let $X$ be a metric space. A sequence $\{f_n\}_{n=1}^\infty \subset \S(X)$ is said to $\Gamma$-converge to $f \in \S(X)$ (written $f_n \gto f$) if

  \begin{enumerate}
    \item For all $x \in X$ there exists a sequence satisfying $x_n \to x$ such that
      \begin{equation}
	f(x) \geq \limsup_{n \to \infty} f_n (x_n).
	\label{eqn:g-conv-def}
      \end{equation}
    \item For all $x_n \to x$ we have that
      \begin{equation}
	f(x) \leq \liminf_{n \to \infty} f_n(x_n).
	\label{eqn:g-conv-def-2}
      \end{equation}
  \end{enumerate} 
\end{defn}

 In essence this definition requires that the limiting object $f$ takes values below any possible limit, but that the value of $f$ is obtained by some appropriately chosen ``recovery sequence''. This notion of convergence imposes essentially the weakest conditions needed to guarantee convergence of minima and minimizers. This is manifest in the following proposition which is of high relevance for our purposes:

\begin{prop}
	\label{prop:mainGamma}
  Suppose that $f_n \gto f$, in some metric space $X$. Then $\min f_n \to \min f$. If $X$ is also compact then if $x_n^* \in \argmin f_n$ then any limit point of $x_n^*$ will be a minimizer of $f$. 
\end{prop}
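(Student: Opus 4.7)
The plan is to exploit the two halves of the $\Gamma$-convergence definition separately: the recovery sequence condition \eqref{eqn:g-conv-def} gives one-sided control on $\min f_n$ from above, while the lower bound condition \eqref{eqn:g-conv-def-2} gives the matching control from below once compactness is invoked. I would begin by noting that, since $f_n, f \in \S(X)$ and (for the second claim) $X$ is compact, each $f_n$ attains its minimum on $X$, and so does $f$.

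For the $\limsup$ half, pick an arbitrary point $y \in X$. Applying \eqref{eqn:g-conv-def}, there exists a sequence $y_n \to y$ with
\[
    f(y) \;\geq\; \limsup_{n \to \infty} f_n(y_n) \;\geq\; \limsup_{n \to \infty} \min f_n.
\]
Taking the infimum over $y \in X$ gives $\min f \geq \limsup_{n \to \infty} \min f_n$. This half requires no compactness.

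For the $\liminf$ half, and simultaneously for the argmin claim, let $x_n^* \in \argmin f_n$ and let $x^*$ be any limit point of $\{x_n^*\}$, whose existence (along a subsequence) is guaranteed by compactness of $X$. Passing to this subsequence and applying \eqref{eqn:g-conv-def-2} with $x_n = x_n^*$ and $x = x^*$,
\[
    f(x^*) \;\leq\; \liminf_{n \to \infty} f_n(x_n^*) \;=\; \liminf_{n \to \infty} \min f_n.
\]
Since $f(x^*) \geq \min f$, we get $\min f \leq \liminf_{n \to \infty} \min f_n$. Combined with the previous inequality, this yields $\min f_n \to \min f$. Moreover, the display above now reads $f(x^*) \leq \min f$, so $x^*$ is indeed a minimizer of $f$, proving the argmin statement.

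The proof itself is routine; the only subtle point is that the first assertion $\min f_n \to \min f$ already tacitly requires some form of sequential compactness of $X$ (or equicoercivity of $\{f_n\}$) in order to extract a convergent subsequence of minimizers and apply the $\liminf$ inequality. I would either state that hypothesis explicitly, or remark that under the weaker setting one only has $\limsup_{n \to \infty} \min f_n \leq \min f$ in general, with the matching lower bound requiring compactness. No other obstacles are expected.
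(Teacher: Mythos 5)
The paper states this proposition as a standard background fact (essentially the fundamental theorem of $\Gamma$-convergence, cf.\ Dal Maso, Chapter 7--10) and gives no proof, so there is nothing in the paper to compare against; your argument is the standard one and is correct. Your closing remark is also well-taken: the first assertion $\min f_n \to \min f$ does tacitly rely on compactness of $X$ (or equicoercivity of $\{f_n\}$), since without it only $\limsup_n \inf_X f_n \leq \inf_X f$ holds in general; the paper's wording is slightly loose on this point, but harmlessly so, since it only ever applies the proposition with $X = \overline{D}$ compact. One minor tightening worth making: when you pass to a subsequence $x_{n_k}^* \to x^*$ and invoke the $\liminf$ inequality, you should either observe that $\Gamma$-convergence is inherited by subsequences, or extend $x_{n_k}^*$ to a full sequence converging to $x^*$; and to conclude convergence of the \emph{whole} sequence $\min f_n$ (not just along the chosen subsequence), you should first pick $n_k$ so that $\min f_{n_k} \to \liminf_n \min f_n$ and then extract the sub-subsequence of argmins converging in $X$. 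These are routine bookkeeping steps and do not affect the substance of the argument.
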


It is natural to seek a topology which describes $\Gamma$-convergence. To this end, we define the following topologies:

\begin{defn}
  \begin{enumerate}
    \item We let $\sigma^+$ be the topology generated by sets of the form $\left\{f \in \S(X): \inf\limits_U f < t\right\}$, where $U$ is any open subset of $X$ and $t \in \R$.
    \item We let $\sigma^-$ be the topology generated by sets of the form $\left\{f \in \S(X): \inf\limits_K f > t\right\}$, where $K$ is any compact subset of $X$ and $t \in \R$. 
    \item We let $\sigma$ be the smallest topology containing $\sigma^+$ and $\sigma^-$.
  \end{enumerate}
\end{defn}

These topologies permit the measurement of minima on open and closed sets. It turns out that this topology is equivalent to $\Gamma$-convergence in the following sense (c.f. Theorem 10.17 in \cite{DalMaso})
\begin{prop}
  Let $X$ be a metric space. A sequence  $\{f_n\}_{n=1}^\infty \subset \S(X)$ converges in $\sigma$ if and only if it $\Gamma$-converges. 
\end{prop}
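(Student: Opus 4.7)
The plan is to split the equivalence according to the two subbasis topologies: show that $\sigma^+$-convergence is equivalent to the ``limsup/recovery sequence'' part of $\Gamma$-convergence, and that $\sigma^-$-convergence is equivalent to the ``liminf'' part, and then combine. Concretely, I would first reformulate both topological convergences in terms of scalar inequalities about infima. Using that $\{g : \inf_U g < t\}$ generates $\sigma^+$, convergence $f_n \to f$ in $\sigma^+$ is equivalent to
\[
\limsup_{n\to\infty} \inf_U f_n \;\leq\; \inf_U f \quad \text{for every open } U\subset X,
\]
and likewise $\sigma^-$-convergence amounts to $\liminf_n \inf_K f_n \geq \inf_K f$ for every compact $K\subset X$. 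Both reformulations follow from the definition of convergence with respect to a subbasis, noting that a neighborhood base for $f$ consists of finite intersections of the subbasic sets that contain $f$.

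Next I would match these inequalities with the two conditions in the $\Gamma$-convergence definition. For the limsup inequality, assume first the $\sigma^+$ characterization and fix $x \in X$. Taking $U_k = B(x,1/k)$ gives $\limsup_n \inf_{U_k} f_n \leq \inf_{U_k} f \leq f(x)$, so there exist $x_{n,k}\in U_k$ with $f_n(x_{n,k}) \leq f(x)+2/k$ for $n$ past some threshold $N_k$; a standard diagonal extraction along $k$ produces a single recovery sequence $x_n \to x$ with $\limsup_n f_n(x_n) \leq f(x)$. Conversely, if recovery sequences exist at every point, then for each $x\in U$ open, eventually $x_n\in U$ and so $\inf_U f_n \leq f_n(x_n)$, giving the $\sigma^+$ inequality after taking the infimum over $x \in U$.

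For the liminf inequality I would argue as follows. Assume $\sigma^-$-convergence and fix $x_n\to x$ and $t<f(x)$. Lower semi-continuity of $f$ at $x$ yields $\delta>0$ with $f \geq t$ on $B(x,\delta)$, and for $N$ large the set $K := \{x\}\cup\{x_n : n\geq N\}$ is compact (a convergent sequence together with its limit) and contained in $B(x,\delta)$. Then $\inf_K f \geq t$, whence $\liminf_n \inf_K f_n \geq t$ and, using $f_n(x_n)\geq \inf_K f_n$ for $n\geq N$, we conclude $\liminf_n f_n(x_n) \geq t$; letting $t\uparrow f(x)$ finishes this direction. Conversely, if the liminf inequality holds and some compact $K$ had $\liminf_n \inf_K f_n < t < \inf_K f$, one extracts near-minimizers $x_{n_k}\in K$ with $f_{n_k}(x_{n_k})<t$, passes to a further subsequence $x_{n_{k_j}}\to x^*\in K$ by compactness, and reaches a contradiction via $t\geq \liminf_j f_{n_{k_j}}(x_{n_{k_j}}) \geq f(x^*) \geq \inf_K f$.

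Combining, convergence in $\sigma$ is the simultaneous convergence in $\sigma^+$ and $\sigma^-$, which by the two equivalences above is exactly $\Gamma$-convergence. The step I expect to be the most delicate is the $(\Rightarrow)$ half of the liminf inequality characterization: in a general metric space one cannot select a compact neighborhood of $x$ a priori, so the argument must be tailored to the specific converging sequence $x_n\to x$, using the lsc of $f$ to shrink to a ball on which $f\geq t$ and then forming the compact set from $x$ together with a tail of the sequence. The diagonal construction of the recovery sequence in the $\sigma^+$ step is also where one must be careful to ensure monotonicity of the chosen thresholds $N_k$, but this is routine.
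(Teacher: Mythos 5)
Your argument is correct, and it fills in a proof that the paper itself does not supply: the paper simply cites this equivalence to Theorem~10.17 of Dal~Maso and moves on. Your decomposition --- matching $\sigma^+$-convergence with the recovery-sequence (limsup) condition and $\sigma^-$-convergence with the liminf condition, then observing that joint convergence in the two subbasis topologies is exactly convergence in $\sigma$ --- is the standard route to this result and would be a reasonable stand-in for the citation. Two technical points worth tightening. First, in the converse of the $\sigma^-$ part you apply the liminf inequality to a \emph{subsequence} $x_{n_{k_j}}\to x^*$; the definition given in the paper quantifies over full sequences, so you should either note that $\Gamma$-convergence transfers the liminf inequality to subsequences (pad the subsequence with the constant $x^*$ on the complementary indices and apply the full-sequence statement, then pass to the liminf along the subsequence, which can only increase) or state the definition in its subsequence-stable form. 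Second, in the $\sigma^+$ diagonalization the case $f(x)=+\infty$ should be set aside (the recovery-sequence inequality is vacuous there, so one may take $x_n\equiv x$), and if $\inf_U f=-\infty$ for some open $U$ the scalar inequality $\limsup_n \inf_U f_n\le \inf_U f$ should be read in the extended sense, which your argument handles with no real change. With these small additions the proof is complete and rigorous.
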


This space of functions is somewhat different from many of the standard function spaces. For example, it possesses the following compactness property (c.f. Theorem 10.6 in \cite{DalMaso}):

\begin{thm}
  The topological space $(\S(X),\sigma)$ is a compact space.
  \label{thm:tau-space-compact}
\end{thm}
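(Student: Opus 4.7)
The plan is to realize $\S(X)$ as (homeomorphic to) a closed subspace of a compact hyperspace via the epigraph map, which is the standard route for this classical result of Dal Maso \cite{DalMaso}. First, I would introduce the map $\iota: \S(X) \to \F(X\times \R)$ defined by
\[ \iota(f) := \{(x,t) \in X \times \R : f(x) \leq t\}, \]
where $\F(Y)$ denotes the set of closed subsets of a topological space $Y$. A standard fact is that $f$ is lower semi-continuous if and only if $\iota(f)$ is closed in $X \times \R$, so $\iota$ is well defined and injective, and its image consists exactly of the \emph{upper-saturated} closed subsets of $X \times \R$, i.e., closed sets $E$ such that $(x,s) \in E$ and $s' \geq s$ imply $(x,s') \in E$.

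Second, I would endow $\F(X \times \R)$ with the \emph{Fell topology}, generated by the sub-basic sets $\{E : E \cap V \neq \varnothing\}$ for $V$ open in $X \times \R$ and $\{E : E \cap C = \varnothing\}$ for $C$ compact in $X \times \R$. A classical theorem of Fell asserts that this hyperspace is compact for any topological space (proved via Alexander's sub-basis theorem combined with an ultrafilter argument that extracts a limit closed set), and I would invoke this fact rather than reprove it.

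Third, I would verify that $\iota$ is a topological embedding by matching the sub-bases. The condition $\inf_O f < t$ for open $O \subset X$ translates directly to $\iota(f) \cap (O \times (-\infty, t)) \neq \varnothing$, a Fell hit-open condition. The condition $\inf_K f > t$ for compact $K \subset X$ translates to $\iota(f) \cap (K \times \{t\}) = \varnothing$, a Fell miss-compact condition; the nontrivial direction uses upper-saturation of the epigraph together with lower semi-continuity of $f$ on the compact set $K$ (so the infimum is attained), allowing one to pass from an apparently weaker condition on the single level $\{t\}$ to strict separation at that level. Fourth, I would show that $\iota(\S(X))$ is closed in the Fell topology, which reduces to verifying that upper-saturation is preserved under Fell limits; this follows from a routine check using test sets of the form $\{x\} \times \{t\}$ (compact) and $U \times (t', \infty)$ (open) to propagate a limit point $(x,t)$ upward to $(x,t')$ for $t' > t$. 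Combining these steps, $(\S(X), \sigma)$ embeds as a closed subset of a compact space and is therefore compact.

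The main obstacle is the sub-base matching in step three: the naive translation $\inf_K f > t \Leftrightarrow \iota(f) \cap (K \times (-\infty,t]) = \varnothing$ involves the non-compact test set $K \times (-\infty, t]$ and so does not directly match a Fell sub-basic set. Leveraging upper-saturation to compress the test set to the compact slice $K \times \{t\}$ is the key technical trick that makes the embedding, and hence the proof strategy, work.
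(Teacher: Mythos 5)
The paper does not prove this theorem; it is quoted as Theorem~10.6 of \cite{DalMaso}, whose own argument proceeds by a direct sequential-compactness / diagonalization argument (extract a subsequence along which $\inf_U f_n$ converges for every $U$ in a countable base, then build the limit l.s.c.\ function from those values). Your proof takes the other classical route, via the epigraph embedding into the Fell hyperspace of closed subsets of $X\times\R$; this is closer in spirit to the Vervaat/sup-measure framework also cited in the paper (\cite{VervaatTopology}). Both are sound and standard; Dal Maso's approach is more elementary and self-contained, while yours is ``softer'' and modular, trading the diagonal extraction for Fell's compactness theorem. Your sub-base matching is the right key observation, in particular the reduction of $\inf_K f > t$ to the miss condition on the compact slice $K\times\{t\}$ using lower semi-continuity on $K$. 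Two details are glossed over and should be filled in for a complete argument: (i) continuity of $\iota$ requires checking that the preimage of a general Fell miss-set $\{E: E\cap C=\varnothing\}$ for arbitrary compact $C\subset X\times\R$ (not just product slices) is $\sigma$-open, which uses a finite-cover argument on $C$; and (ii) the test sets you name for closedness of the image are not quite right: $\{E : E\cap(\{x\}\times\{t\})=\varnothing\}$ is Fell-open, so ``$(x,t)\in E$'' is a Fell-\emph{closed} condition, not an open one. The fix is routine: if $E$ is a Fell-limit point of epigraphs and $(x,t)\in E$ but $(x,t')\notin E$ for some $t'>t$, separate $(x,t')$ from $E$ by a compact neighborhood $C$ and hit a small open box $V$ around $(x,t)$; any epigraph hitting $V$ must, by upper-saturation, also hit $C$, contradicting that the basic open set $\{E': E'\cap V\neq\varnothing,\ E'\cap C=\varnothing\}$ is a neighborhood of $E$ meeting $\iota(\S(X))$. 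With those repairs the argument is complete.
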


An immediate application, which is of importance to the present investigation, is the following:

\begin{cor}
  Any sequence $\{f_n\} \in \S(X)$ has a subsequence which $\Gamma$-converges.
\end{cor}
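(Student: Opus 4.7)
The plan is to derive sequential compactness from the topological compactness of $(\S(X),\sigma)$ stated in Theorem \ref{thm:tau-space-compact}, and then translate $\sigma$-convergence into $\Gamma$-convergence using the preceding proposition. The only subtlety is that in a general topological space compactness does not automatically yield sequential compactness; one needs some countability structure on $\sigma$. Thus the bulk of the proof is a second-countability verification for $\sigma$.

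First, I would construct a countable sub-basis for $\sigma$. Assuming $X$ is a separable metric space (the standard setting, and the one needed for the applications in this paper), take a countable basis $\{U_k\}_{k\in\N}$ of open subsets of $X$ and a countable exhaustion $\{K_m\}_{m\in\N}$ of $X$ by compact sets. I would then check that the collection of sets $\{f\in\S(X): \inf_{U_k} f < q\}$ and $\{f\in\S(X): \inf_{K_m} f > q\}$, indexed by $k,m\in\N$ and $q\in\Q$, already generates $\sigma$. This uses the definitions of $\sigma^\pm$ together with the elementary observation that $\inf_U f$ and $\inf_K f$ only depend on $f$ through its values on a dense/compact-exhausting countable family.

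Second, once $\sigma$ is second-countable, the compact space $(\S(X),\sigma)$ is also first-countable, and the standard argument that in a first-countable space every cluster point of a sequence is the limit of a subsequence converts compactness into sequential compactness. Applied to $\{f_n\}$, this produces a subsequence $\{f_{n_k}\}$ converging to some $f\in\S(X)$ in the $\sigma$ topology, and then the preceding proposition (equivalence of $\sigma$-convergence and $\Gamma$-convergence) gives $f_{n_k}\gto f$, which is the desired conclusion.

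The main obstacle is the second-countability verification; everything else is bookkeeping. An alternative route that bypasses the topology altogether is a direct diagonal construction: enumerate a countable basis $\{U_k\}$ of $X$, extract a subsequence along which $\inf_{U_k} f_n$ converges in $[-\infty,\infty]$ for every $k$, and define a candidate limit $f\in\S(X)$ as the lower semicontinuous envelope of $x\mapsto \sup_{U_k\ni x}\lim_j \inf_{U_k} f_{n_j}$. One then verifies the two defining conditions of $\Gamma$-convergence using recovery sequences built from points realizing the infima over the basis elements. Either route delivers the corollary.
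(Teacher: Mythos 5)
Your overall plan is sound, and you correctly flag the point the paper glosses over: compactness of $(\S(X),\sigma)$ from Theorem \ref{thm:tau-space-compact} does not \emph{immediately} yield sequential compactness without some countability on $\sigma$. The paper gives no proof (it calls this an ``immediate application''), but the ingredients it actually supplies for closing this gap are Proposition \ref{prop:sigma-alg} (for $X=\overline{D}$, $\sigma^\pm$ are generated by infima over cubes, hence by a countable family of rational cubes) and Proposition \ref{Prop:metrizable} (for compact $X$, $\sigma$ is metrizable). Either makes the corollary truly immediate; so your first route and the paper's implicit route are close in spirit.

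There is, however, a concrete flaw in the second-countability verification as you sketched it. Taking a countable \emph{exhaustion} $\{K_m\}$ of $X$ by compacts is far too poor a family to generate $\sigma^-$. The $\sigma^-$ sub-basic sets $\{f:\inf_K f > t\}$ are anti-monotone in $K$: enlarging $K$ shrinks the set, so an increasing exhaustion only produces a very sparse collection. Concretely, if $X=[0,1]$, the exhaustion is eventually constant equal to $X$ and you only recover sets of the form $\{f:\inf_{[0,1]} f > t\}$, which is nowhere near all of $\sigma^-$. What you need instead is a countable family of compacts that is rich enough to ``approximate from outside'' every compact set --- e.g.\ closed cubes with rational vertices, or finite unions of closed balls with rational centers and radii. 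With that substitution (and, for the $\sigma^+$ side, your argument with a countable basis of opens and rational thresholds is fine), the second-countability claim goes through, and the rest of your topological argument (first-countable plus compact implies sequentially compact, then $\sigma$-convergence equals $\Gamma$-convergence) is correct.

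Your proposed alternative --- the direct diagonal construction extracting a subsequence along which $\inf_{U_k} f_n$ converges for every basis element $U_k$, then building the $\Gamma$-limit as a lower semicontinuous envelope --- is a legitimate and self-contained route; it is essentially the standard compactness proof for $\Gamma$-convergence in Dal Maso's treatment. It has the advantage of bypassing the topology on $\S(X)$ entirely and only using a countable basis of $X$, so it does not run into the exhaustion issue at all. If you want a proof that does not lean on the compact-cube characterization of $\sigma^-$, that is the cleaner path.
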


In general the topology generated by $\Gamma$-convergence will not be metrizable (or even Hausdorff). However, in the setting where $X$ is compact we have the following as a consequence of Corollary 10.23 in \cite{DalMaso}; see also Theorem 5.5 in \cite{VervaatTopology}:

\begin{prop}\label{Prop:metrizable}
  Let $X$ be a \emph{compact} metric space. Then $\sigma$ is metrizable.
\end{prop}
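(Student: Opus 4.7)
The plan is to deduce metrizability from the classical fact that every compact Hausdorff topological space which is second countable is metrizable (a standard consequence of Urysohn's metrization theorem). Since Theorem \ref{thm:tau-space-compact} already guarantees that $(\S(X),\sigma)$ is compact, it suffices to verify that $\sigma$ is Hausdorff and that it admits a countable subbase.

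For Hausdorffness, I would take two distinct lower semi-continuous functions $f,g \in \S(X)$ and a point $x_0$ with, say, $f(x_0) < g(x_0)$. Choose reals $s_1 < s_2$ strictly between these two values. Because $g$ is lsc, the set $\{g > s_2\}$ is an open neighborhood of $x_0$, and since $X$ is a compact metric space (hence regular) I can insert an open $V$ with $x_0 \in V \subset \overline{V} \subset \{g > s_2\}$, where $\overline{V}$ is automatically compact. Then $f \in A := \{h : \inf_V h < s_1\} \in \sigma^+$ and $g \in B := \{h : \inf_{\overline V} h > s_1\} \in \sigma^-$, and $A \cap B = \emptyset$ since $V \subset \overline V$ forces $\inf_V h \ge \inf_{\overline V} h$ for every $h$.

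For second countability, fix a countable base $\{U_n\}_{n \in \N}$ for $X$; each $\overline{U_n}$ is compact. I claim that the countable family
\[
\Bigl\{\{h : \inf_{U_n} h < t\}\Bigr\}_{n \in \N,\, t \in \Q} \;\cup\; \Bigl\{\{h : \inf_{\overline{U_n}} h > t\}\Bigr\}_{n \in \N,\, t \in \Q}
\]
is a subbase for $\sigma$. For $\sigma^+$ this is routine: any $\{h : \inf_U h < t\}$ is a union of sets of the form $\{h : \inf_{U_n} h < t'\}$ over $U_n \subset U$ and rational $t' < t$. The harder direction is $\sigma^-$, where one must approximate an arbitrary compact $K$ using only the countable family. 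Given $f_0$ with $\inf_K f_0 > t$, lower semi-continuity plus compactness yields $\inf_K f_0 \ge s > t$ for some $s$, so $K \subset \{f_0 > (s+t)/2\}$; a finite covering argument produces basic open sets $U_{n_1},\dots,U_{n_m}$ covering $K$ with each $\overline{U_{n_i}} \subset \{f_0 > (s+t)/2\}$. Setting $W = \bigcap_{i} \{h : \inf_{\overline{U_{n_i}}} h > t'\}$ for a rational $t' \in (t,(s+t)/2)$ gives a countable-subbase neighborhood of $f_0$ contained in $\{h : \inf_K h > t\}$, which is what was needed.

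The main obstacle is precisely this countable generation of $\sigma^-$: the naive family of all compact sets is uncountable, and a compact $K$ cannot in general be approximated from inside by elements of a countable family (intersecting a nested sequence $\overline{V_n} \downarrow K$ goes the wrong way for controlling $\inf_K$). The covering argument above circumvents this by working at each fixed function $f_0$ and exploiting lsc together with compactness of $K$. Once Hausdorff and second countable are established, metrizability follows immediately from the Urysohn theorem applied to the compact Hausdorff space $(\S(X),\sigma)$.
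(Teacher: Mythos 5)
Your proof is correct. Note, though, that the paper gives no argument of its own for this proposition --- it simply cites Corollary 10.23 of Dal Maso and Theorem 5.5 of Vervaat --- so there is no ``paper's proof'' to compare against; your self-contained derivation fills that gap. The route you take (compact Hausdorff plus second countable implies metrizable, via Urysohn) is the standard one and is almost certainly what those references do. Both of the non-trivial steps are handled correctly: the Hausdorff separation by pairing a $\sigma^+$-neighborhood of $f$ on $V$ with a $\sigma^-$-neighborhood of $g$ on $\overline V$, and the countable generation of $\sigma^-$ by covering a compact $K$ with finitely many closures $\overline{U_{n_i}}$ sitting inside the open superlevel set $\{f_0 > (s+t)/2\}$. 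One minor point worth making explicit (you use it twice, in showing $\inf_{\overline V} g > s_1$ and in showing $f_0 \in W$): a lower semi-continuous function attains its infimum on any nonempty compact set, so the strict pointwise inequality $g > s_2$ on $\overline V$ really does yield $\inf_{\overline V} g > s_2$ rather than just $\geq s_2$.
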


%WARNING: This takes a little bit of unpacking from what Dal Maso has, mostly because he is not thinking about $X$ ever being compact. Basically, he restricts to the class of equicoercive functions (namely functions $f \geq \Psi$, where the closure of $\{\Psi \leq t\}$ is compact, but when $X$ is compact any $\Psi$ will do). I'm fine not showing the details, but we can if you prefer.

Finally, in this paper we will work with $X= \overline{D}$ where $D$ is a bounded domain in $\R^d$. This allows us to characterize the topology in terms of cubes. The following is a direct application of Theorem 5.3 in \cite{VervaatTopology}:

\begin{prop}\label{prop:sigma-alg}
  For $X := \overline{D}$ where $D \subset \R^d$ is an open bounded domain,  $\sigma^\pm$ are generated by evaluating the infimum of functions over $U$ being open cubes and $K$ being closed cubes (as opposed to arbitrary open and closed sets).

%  If $X = [0,1]^d$ then $\sigma^\pm$ are generated by evaluating the infimum of functions over $U$ being open cubes and $K$ being closed cubes (as opposed to arbitrary open and closed sets).
\end{prop}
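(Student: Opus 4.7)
The plan is to show the two inclusions, of which one direction is immediate and the other requires some care separately for $\sigma^+$ and $\sigma^-$. Since open (resp.\ closed) cubes are themselves open (resp.\ compact) subsets of $\overline{D}$, the topology generated by evaluating infima over cubes is automatically contained in the topology generated by using arbitrary open/compact sets. So the only content is to show that every subbasic set of $\sigma^\pm$ is open in the cube-generated topology.

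For $\sigma^+$, my approach will exploit the countable-cube-decomposition of Euclidean open sets. Given $U \subset \overline{D}$ relatively open, write $U = \bigcup_{i=1}^\infty Q_i$ with each $Q_i$ an open cube (after intersecting with $\overline{D}$ this can always be arranged). For any $f \in \mathcal{S}(\overline{D})$ one has
\[
\inf_U f \;=\; \inf_{i \in \N}\, \inf_{Q_i} f,
\]
so
\[
\{f : \inf_U f < t\} \;=\; \bigcup_{i=1}^\infty \{f : \inf_{Q_i} f < t\}.
\]
This is a countable union of cube-subbasic sets and hence lies in the cube-generated topology.

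The main obstacle is the $\sigma^-$ statement, because ``$\inf_K f > t$'' does not decompose as nicely under unions, and we need a finite-cover argument that genuinely uses lower semicontinuity. My plan is to argue locally: fix $K \subset \overline{D}$ compact and $t \in \R$, and suppose $f \in \mathcal{S}(\overline{D})$ satisfies $\inf_K f > t$. Choose $s$ with $t < s < \inf_K f$. Since $f$ is lower semicontinuous, the sublevel complement $\{f > s\}$ is open and contains $K$. For each $x \in K$ I can therefore find an open cube $B_x$ around $x$ whose closure $\overline{B_x}$ is still contained in $\{f > s\}$; by compactness a finite subcollection $\overline{B_{x_1}}, \dots, \overline{B_{x_n}}$ covers $K$. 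Then the set
\[
W \;:=\; \bigcap_{i=1}^n \bigl\{ h \in \mathcal{S}(\overline{D}) : \inf_{\overline{B_{x_i}}} h > t \bigr\}
\]
is a finite intersection of cube-subbasic sets (hence open in the cube-generated topology), contains $f$, and for any $h \in W$ one has $\inf_K h \geq \min_i \inf_{\overline{B_{x_i}}} h > t$, so $W \subset \{h : \inf_K h > t\}$. This exhibits the $\sigma^-$ subbasic set as a union (over all valid finite closed-cube covers of $K$) of finite intersections of cube-subbasic sets, completing the argument.

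The delicate point in the $\sigma^-$ part is the ability to shrink from ``$f > s$ on an open neighborhood'' to ``$f > t$ on a closed cube'', which is where lower semicontinuity and the strict inequality $t < s$ do the work; without that strict slack one could not guarantee that the closed cube (with its boundary) still gives $\inf f > t$.
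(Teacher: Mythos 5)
Your proof is correct. The paper itself gives no argument --- it simply records the statement as ``a direct application of Theorem 5.3 in \cite{VervaatTopology}'' --- so you are supplying a self-contained direct proof where the authors deferred to an external reference. Your structure is the natural one: for $\sigma^+$, write the relatively open set $U \subset \overline{D}$ as a countable union of (relative) open cubes and observe that $\{\inf_U f < t\}$ is then a countable union of cube-subbasic sets; for $\sigma^-$, use lower semicontinuity and compactness of $K$ to produce a finite cover of $K$ by closed cubes sitting inside $\{f > s\}$ for some $s$ strictly between $t$ and $\inf_K f$, giving a finite intersection of cube-subbasic sets that contains $f$ and is contained in $\{\inf_K f > t\}$. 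The strict slack $t < s$ is indeed the point that lets the closed cubes, boundary included, still certify $\inf > t$, so you have identified the delicate step correctly. What your route buys is transparency and self-containedness; what the paper's route buys is brevity and reliance on a canonical source. One presentational point worth making explicit: ``open cube'' and ``closed cube'' should be read as the traces $Q \cap \overline{D}$ and $\overline{Q} \cap \overline{D}$ on the compact ambient space $\overline{D}$, since the $\sigma^\pm$ subbases are indexed by open and compact subsets of $\overline{D}$ rather than of $\R^d$; you do handle this implicitly (``after intersecting with $\overline{D}$''), but stating it once up front would remove any ambiguity.
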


%{\color{red}Can we relax the previous proposition to other domains $D$? We use it in characterizing $W_\lambda$ below. I think that we're fine with any domain there, but check it\dots}

\subsection{The process $W_\lambda$}\label{sec:W-props}

In this subsection we establish some properties of the extreme value process $W_\lambda$ introduced in Definition \ref{def:W}. We first show the existence and uniqueness of $W_\lambda$, and then the existence an uniqueness of its $k$-th argmins. We will denote by $\mathcal{P}(\S)$  the space of probability measures over the space of lower semi-continuous functions on $\bar{D}$.

\subsubsection{Existence and uniqueness of the process $W_\lambda$}

We first show that the distribution of $W_\lambda$ is a well-defined object. 

\begin{prop}\label{prop:W-unique}
  Any two random variables with values in $\mathcal{S}(\overline{D})$ satisfying Definition \ref{def:W} have the same distribution. 
\end{prop}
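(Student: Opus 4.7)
The plan is to show that the two conditions of Definition~\ref{def:W} uniquely determine the law of $W_\lambda$ on the Borel $\sigma$-algebra of $(\mathcal{S}(\overline{D}), \sigma)$. First, I would identify a generating $\pi$-system. By Proposition~\ref{prop:sigma-alg}, $\sigma^-$ is generated by sets of the form $\{f : \inf_K f > t\}$ with $K$ a closed cube and $t \in \R$. The events in $\sigma^+$ are subsumed: for any open cube $U \subset \overline{D}$ with a closed-cube exhaustion $K_m \uparrow U$, lower semicontinuity yields $\inf_U f = \lim_m \min_{K_m} f$ and hence $\{f : \inf_U f < t\} = \bigcup_m \{f : \min_{K_m} f < t\}$. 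By a similar monotone argument, the same $\sigma$-algebra is generated as $K$ ranges over arbitrary closed (equivalently, compact) subsets of $\overline{D}$, and finite intersections of such sets form a $\pi$-system. By Dynkin's theorem, uniqueness reduces to showing that
\[ \mathbb{P}\!\left(\inf_{K_1} W_\lambda > t_1, \dots, \inf_{K_n} W_\lambda > t_n\right) \]
is determined by conditions 1 and 2 for every finite family of closed sets $K_1, \dots, K_n$ and every $t_1, \dots, t_n \in \R$.

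Next I would reduce the overlapping case to the disjoint case via an atomic decomposition. For each nonempty $S \subseteq \{1, \dots, n\}$ set $A_S := \big(\bigcap_{i \in S} K_i\big) \setminus \big(\bigcup_{i \notin S} K_i\big)$; the $A_S$ are pairwise disjoint and partition $\bigcup_i K_i$. Setting $t_S^* := \max_{i \in S} t_i$, the event above coincides with $\bigcap_S \{\inf_{A_S} W_\lambda > t_S^*\}$. Each $A_S$ is locally closed, and therefore $\sigma$-compact, so it admits an increasing compact exhaustion $A_S^{(m)} \uparrow A_S$ (e.g., by intersecting $\bigcap_{i \in S} K_i$ with a closed-cube exhaustion of the open set $\bigcap_{i \notin S} K_i^c$). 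Monotonicity of the infimum under set inclusion gives $\min_{A_S^{(m)}} W_\lambda \downarrow \inf_{A_S} W_\lambda$, so the full event is the decreasing intersection over $m$ of $\bigcap_S \{\min_{A_S^{(m)}} W_\lambda > t_S^*\}$.

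For each fixed $m$, the compacts $(A_S^{(m)})_S$ are pairwise disjoint, so conditions 1 and 2 of Definition~\ref{def:W} yield
\[ \mathbb{P}\!\left(\bigcap_S \{\min_{A_S^{(m)}} W_\lambda > t_S^*\}\right) = \prod_S \exp\!\left(-\lambda_{A_S^{(m)}} \max(t_S^*, 0)\right). \]
Letting $m \to \infty$, continuity of probability from above together with monotone convergence $\lambda_{A_S^{(m)}} \to \lambda_{A_S}$ produces the explicit formula
\[ \mathbb{P}\!\left(\inf_{K_i} W_\lambda > t_i \text{ for all } i\right) = \prod_S \exp\!\left(-\lambda_{A_S} \max(t_S^*, 0)\right), \]
which depends only on integrals of $\lambda$ over the atoms $A_S$ and on the thresholds, and is therefore identical for any two random elements satisfying Definition~\ref{def:W}. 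The main technical obstacle is that the atoms $A_S$ are in general not themselves closed, so conditions 1 and 2 do not apply directly to them; the compact exhaustion combined with monotone convergence of both the random infima and the rate integrals is precisely what bypasses this difficulty. The rest of the argument is a routine application of Dynkin's $\pi$-$\lambda$ theorem to upgrade the agreement on the generating $\pi$-system to agreement on all Borel sets.
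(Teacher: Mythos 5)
Your proof is correct and rests on the same scaffold as the paper's: generate the Borel $\sigma$-algebra of $(\mathcal{S}(\overline{D}),\sigma)$ from cube-infimum events via Proposition~\ref{prop:sigma-alg}, show that Properties 1 and 2 pin down the probability of every event in a generating $\pi$-system, and close with Dynkin's $\pi$-$\lambda$ theorem. Where you diverge is in the explicit treatment of finite intersections $\bigcap_i\{\inf_{K_i} W_\lambda > t_i\}$ when the $K_i$ \emph{overlap}: the paper states only that ``by using Property 2 \dots any finite intersection of sets of these forms will also be in $\mathcal{A}$,'' but Property 2 grants independence only for \emph{disjoint} closed sets and so does not apply directly to overlapping cubes. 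Your atomic decomposition $A_S = \bigl(\bigcap_{i\in S} K_i\bigr)\setminus\bigl(\bigcup_{i\notin S} K_i\bigr)$, combined with a compact exhaustion of the (merely locally closed) atoms and monotone convergence of both the random infima and the rate integrals $\lambda_{A_S^{(m)}}\to\lambda_{A_S}$, is exactly the computation that makes this step rigorous; in that sense your write-up is the more careful one. Two small points worth making explicit: first, continuity from above yields $\mathbb{P}\bigl(\bigcap_S\{\inf_{A_S} W_\lambda \geq t_S^*\}\bigr)$ with a weak inequality, which agrees with the open-inequality event only up to a null set; this is harmless because each $\inf_{A_S}W_\lambda$ is either exponential or a.s.\ $+\infty$ and so has no atom at any finite value, but it deserves a sentence. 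Second, you absorb the $\sigma^+$ generators into countable unions of closed-cube events and work with the $\sigma^-$ generators alone, whereas the paper retains both open- and closed-cube events in the $\pi$-system; the two choices are equivalent and yours is marginally cleaner.
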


\begin{proof}
  Suppose that $\Prob_1,  \Prob_2 \in \mathcal{P}(\S)$ are the distributions of two random variables satisfying Definition \ref{def:W}. Let
  \begin{displaymath}
    \mathcal{A} = \{ E \subset \mathcal{S}(D) | E \in \sigma, \Prob_1(E) = \Prob_2(E)\}.
  \end{displaymath}
  It is straightforward to verify that $\mathcal{A}$ is a $\lambda$-system.

  Next, by Property 1 in Definition \ref{def:W}, we have that any set of the form $\{ \inf_K f > s\}$ is in $\mathcal{A}$ for any closed cube $K$ and real number $s$. Similarly, by taking a limit of cubes from the outside and again using Property 1 in Definition \ref{def:W} we have that any set of the form $\{ \int_U f < t\}$ is in $\mathcal{A}$ for any open cube $U$ and real number $t$. By using Property 2 in Definition \ref{def:W} we will have that any finite intersection of sets of these forms will also be in $\mathcal{A}$. Thus $\mathcal{A}$ contains the $\pi$-system generated by sets of these forms. By the $\pi-\lambda$ theorem, $\mathcal{A}$ contains the sigma algebra generated by sets of these forms. By Proposition \ref{prop:sigma-alg} it follows that $\mathcal{A} = \sigma$. This implies that $\Prob_1 = \Prob_2$, which concludes the proof.
\end{proof}

Now we will demonstrate the existence of $W_\lambda$ by constructing an appropriate approximating sequence.

\begin{prop}
\label{ConvergenceGamma}
Let $\{x_1, x_2, \dots \}$ be a countable subset of $\overline{D}$ and for every $n \in \N$ let 
\[  \mu_n:= \frac{1}{n}\sum_{i=1}^n \delta_{x_i}. \]
We assume that $\mu_n$ converges weakly towards the distribution with density proportional to $\lambda(x)$. 
Let $\{\xi_1, \xi_2, \dots \}$ be i.i.d. exponentially-distributed random variables with rate one. Then, the random functions
\[ W_n(x) :=  \begin{cases}    \frac{n}{  \overline{\lambda}}  \xi_i   , \quad  \text{ if } x=x_i, \\ + \infty,  \quad \text{ otherwise,}        \end{cases}\]
converge weakly (in $\mathcal{P}(\S)$) towards $W_\lambda$, where in the formula for $W_n$ we are using
\[ \overline{\lambda}:= \int_D  \lambda(x) dx.  \]
\end{prop}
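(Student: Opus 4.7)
The plan is a compactness-plus-uniqueness argument. By Theorem \ref{thm:tau-space-compact} and Proposition \ref{Prop:metrizable}, the space $(\mathcal{S}(\overline{D}),\sigma)$ is compact and metrizable, so $\mathcal{P}(\mathcal{S})$ is itself compact and metrizable in the weak topology. Consequently every subsequence of $\{W_n\}$ admits a weakly convergent sub-subsequence, and in view of Proposition \ref{prop:W-unique} it suffices to check that any such weak limit $W$ satisfies the two defining conditions of Definition \ref{def:W}; the full sequence then converges weakly to the common limit $W_\lambda$.

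The main input is the explicit discrete calculation: independence of the $\xi_i$ gives, for every Borel set $B\subset\overline{D}$ and every $t\geq 0$,
\[
\Prob\bigl(\inf_B W_n > t\bigr) = \exp\!\bigl(-\bar\lambda\,\mu_n(B)\,t\bigr).
\]
Restricting attention to cubes with $\mu_\lambda$-null boundaries, which still generate $\sigma$ by Proposition \ref{prop:sigma-alg}, the hypothesis $\mu_n\Rightarrow\mu_\lambda$ yields $\mu_n(B)\to\lambda_B/\bar\lambda$, so the right-hand side tends to $e^{-\lambda_B t}$. The Portmanteau theorem in $(\mathcal{S},\sigma)$ then provides two one-sided conclusions for any weak limit $W$: on the $\sigma$-open sets $\{f:\inf_K f > t\}$ for closed cubes $K$ one obtains $\Prob(\inf_K W > t)\leq e^{-\lambda_K t}$, while on the $\sigma$-open sets $\{f:\inf_U f < t\}$ for open cubes $U$, passed through their complements, one obtains $\Prob(\inf_U W\geq t)\geq e^{-\lambda_U t}$.

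To verify Property~1, approximate a closed cube $K$ from outside by nested open cubes $U^{(m)}\supset K$ with $\mu_\lambda$-null boundaries and $\lambda_{U^{(m)}}\downarrow \lambda_K$; monotonicity of $\inf$ in the underlying set combined with the open-cube bound gives
\[
\Prob(\inf_K W \geq t) \;\geq\; \Prob(\inf_{U^{(m)}} W \geq t) \;\geq\; e^{-\lambda_{U^{(m)}} t} \;\longrightarrow\; e^{-\lambda_K t},
\]
which together with the upper bound forces $\Prob(\inf_K W \leq t)=1-e^{-\lambda_K t}$ and the absence of atoms. Property~2 follows by the same mechanism applied to disjoint closed cubes $K_1,\dots,K_k$: the open set $\bigcap_j\{\inf_{K_j} f > t_j\}$ produces the upper bound $\Prob(\inf_{K_j} W > t_j,\,\forall j)\leq \prod_j e^{-\lambda_{K_j} t_j}$, where the product structure is inherited from independence of the $\xi_i$ across disjoint index sets at the discrete level, while the open set $\bigcup_j\{\inf_{U_j^{(m)}} f < t_j\}$ for disjoint open enlargements $U_j^{(m)}\supset K_j$ delivers the matching lower bound on the $\geq$ joint event. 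The marginal continuity established in Property~1 identifies the $>$ and $\geq$ joint events in measure, yielding the desired product form, and a density argument over cubes with $\mu_\lambda$-null boundaries extends these identities from cubes to arbitrary closed and open sets.

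The principal technical obstacle is the one-sided semi-continuity of the infimum functionals on $(\mathcal{S},\sigma)$: $\inf_K$ is only lower semi-continuous and $\inf_U$ is only upper semi-continuous, so Portmanteau yields only an inequality in each direction. The squeeze between a closed cube $K$ and an outer family of open cubes $U^{(m)}$, together with the ability to make these enlargements simultaneously disjoint in the joint setting, is precisely what closes the gap and upgrades the inequalities to the equalities demanded by Definition \ref{def:W}.
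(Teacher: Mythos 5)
Your proof follows the same compactness-plus-uniqueness skeleton as the paper's: compactness and metrizability of $(\mathcal{S}(\overline{D}),\sigma)$ give (via Prokhorov, or directly via compactness of $\mathcal{P}(\mathcal{S})$) subsequential weak limits, and Proposition \ref{prop:W-unique} identifies any such limit with $W_\lambda$ once the two defining properties are verified, so the full sequence converges. Where your write-up is more careful than the published argument is the Portmanteau step. The paper passes from the exact discrete law of $\min_{C\cap\overline{D}}W_n$ to the analogous statement for a limit point by appealing to measurability of $f\mapsto\min_{C\cap\overline{D}}f$, but measurability alone does not transport probabilities along weak convergence; some form of continuity is needed, and on $(\mathcal{S},\sigma)$ the map $\inf_K$ is only lower semi-continuous while $\inf_U$ is only upper semi-continuous. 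You identify this precisely, extract the matching one-sided bounds $\Prob(\inf_K W>t)\leq e^{-\lambda_K t}$ and $\Prob(\inf_U W\geq t)\geq e^{-\lambda_U t}$ from Portmanteau on the $\sigma$-open sets $\{\inf_K f>t\}$ and $\{\inf_U f<t\}$, and close the gap by squeezing $K$ between outer open cubes $U^{(m)}\downarrow K$, correctly noting that $\mu_n(B)\to\mu_\lambda(B)$ requires $\mu_\lambda$-null boundary (automatic for cubes), a hypothesis the paper also implicitly uses. The same two-sided squeeze run jointly over disjoint cubes, with independence inherited from the discrete level, delivers Property~2.

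Two minor remarks. First, to obtain strict equality and atomlessness from your two bounds, make the monotone limit $t'\uparrow t$ in $\Prob(\inf_K W>t')\leq e^{-\lambda_K t'}$ explicit before combining with the lower bound $\Prob(\inf_K W\geq t)\geq e^{-\lambda_K t}$. Second, the closing extension from cubes to arbitrary closed and open sets is harmless but unnecessary: the $\pi$-$\lambda$ argument in Proposition \ref{prop:W-unique}, via Proposition \ref{prop:sigma-alg}, already identifies the law from the cube data alone. With these tweaks the argument is complete, and it usefully makes explicit a step that the paper's proof leaves implicit.
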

\begin{proof}

  Since the space $(\mathcal{S}(\overline{D}),\sigma)$ is compact, any sequence in $\mathcal{P}(\mathcal{S})$ is tight. Therefore, using that $(\mathcal{S}(\overline{D}),\sigma)$ is a separable, complete, metrizable space, Prokhorov's theorem implies that $W_n$ must have a limit point (in the sense of weak convergence of measures) in $\mathcal{P}(\mathcal{S})$.

Let $\tilde W$ be some limit point of the $W_n$. Our goal is to show that $\tilde W$ satisfies the conditions given in the definition for $W_\lambda$. To that end, first consider a closed set $C$. From the definition of $W_n$, at every point $x_i$, $i=1\dots n$ we have an independent exponential variable, with rate $\frac{\bar \lambda}{n}$. This then implies that $\min_{C \cap \overline D} W_n$ is exponentially distributed with rate $\mu_n(C \cap \overline D) \bar \lambda$. As $\mu_n \rightharpoonup \frac{\lambda}{\overline \lambda} dx$, we have that $$\lim_{n \to \infty}\min_{C \cap \overline D} W_n$$ is exponentially distributed with rate $\int_{C \cap \overline D} \lambda(x) \,dx$. Since taking mins over closed sets is measurable in $\mathcal{P}(\S)$, $\tilde W$ must satisfy the first point in the definition of $W_\lambda$.

For the second property, we notice that the min over a finite number of disjoint closed sets will be independent under $W_n$, as the $\xi_i$ are all independent. These events will also be in the topology $\sigma$. Hence by taking limits we obtain that $\tilde W$ will satisfy the second property in the definition of $W_\lambda$. The uniqueness of $W_\lambda$ given by Proposition \ref{prop:W-unique} then gives that the unique limit point of $W_n$ in $\mathcal{P}(\S)$ is precisely $W_\lambda$, which completes the proof.

\end{proof}

The previous construction of $W_\lambda$ closely mirrors the framework described in the introduction: namely that we trace the rescaled minimum of many exponentially-distributed variables. An alternative construction (which can be connected to the Poisson process construction of extremal processes in \cite{Pickands71}) is also possible. This construction makes certain properties of the $W_\lambda$ simpler to visualize, and we include it for completeness.

\begin{prop} \label{prop:ConvergenceGamma-alt}
  Let $\{x_1,x_2, \dots\}$ be a sequence of points, i.i.d according to the probability measure $\frac{\lambda}{\overline \lambda} dx$ . Let $\{\xi_1,\xi_2,\dots\}$ be an i.i.d. sequence of random variables distributed as $\exp(\bar \lambda)$. Define the following random functions:
  \begin{displaymath}
    W_n(x) = \begin{cases} \sum\limits_{j=1}^i \xi_j,\quad &\text{ for } x = x_i, \enspace i \leq n, \\ + \infty, &\text{ otherwise}.\end{cases}
  \end{displaymath}
  Then the $W_n$ converge weakly (in $\mathcal{P}(\mathcal{S})$) towards $W_\lambda$.
\end{prop}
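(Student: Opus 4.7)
The plan is to follow the same template as the proof of Proposition \ref{ConvergenceGamma}: use compactness of $(\mathcal{S}(\overline{D}),\sigma)$ together with Prokhorov's theorem to extract a weak limit point $\tilde W$ of $\{W_n\}$, verify that $\tilde W$ satisfies both defining properties of $W_\lambda$ in Definition \ref{def:W}, and conclude by the uniqueness result of Proposition \ref{prop:W-unique}. Because any weak limit point must then coincide with $W_\lambda$ in distribution, the full sequence converges.

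The key structural observation driving the verification is that the random collection $\{(x_i, T_i)\}_{i\geq 1}$, where $T_i := \sum_{j=1}^i \xi_j$, forms a Poisson point process on $\overline{D} \times [0,\infty)$ with intensity $\lambda(x)\,dx\otimes dt$. Indeed, since the $\xi_j$ are i.i.d.\ $\exp(\bar\lambda)$, the sequence $\{T_i\}$ consists of the arrival times of a Poisson process of rate $\bar\lambda$, and the i.i.d.\ marks $x_i$ are sampled from the probability measure $\lambda/\bar\lambda\,dx$, so the marked process has intensity $\bar\lambda\, dt \otimes \frac{\lambda(x)}{\bar\lambda}\,dx = \lambda(x)\,dx\otimes dt$. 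Observe that
\[
  \min_{x \in C} W_n(x) \;=\; \min\{T_i \,:\, x_i \in C,\ 1 \leq i \leq n\}
\]
for any closed $C \subset \overline{D}$, with the convention $\min\varnothing = +\infty$.

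To verify property (1) for $\tilde W$: by Poisson thinning over the mark space, $\{T_i : x_i \in C\}_{i\geq 1}$ is itself a Poisson process on $[0,\infty)$ with rate $\lambda_C = \int_C \lambda(x)\,dx$, and its smallest atom is therefore $\exp(\lambda_C)$-distributed. Since $T_n \to \infty$ almost surely, the truncation to $i \leq n$ becomes irrelevant below any fixed threshold for $n$ large, so $\min_C W_n \to \exp(\lambda_C)$ in distribution. Because functionals of the form $f \mapsto \min_C f$ are $\sigma$-measurable (they generate $\sigma^-$, cf.\ Proposition \ref{prop:sigma-alg}), this distributional statement transfers to $\tilde W$. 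For property (2), given disjoint closed sets $C_1,\dots,C_k$, restricting the marked Poisson process to the disjoint mark sets $C_l$ produces independent Poisson processes on $[0,\infty)$, hence their first atoms are independent; passing to the limit preserves this joint independence for $\tilde W$.

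The main obstacle I expect is the careful passage from convergence of minima over closed sets in distribution to the corresponding statement for the weak limit $\tilde W$, since the natural functional $f \mapsto \min_C f$ is only upper semi-continuous in $\sigma$. However, this issue is identical to the one handled in the proof of Proposition \ref{ConvergenceGamma}, and the same argument based on the generators of $\sigma^\pm$ described in Proposition \ref{prop:sigma-alg} resolves it here as well. With both properties verified, Proposition \ref{prop:W-unique} gives $\tilde W \stackrel{d}{=} W_\lambda$ and completes the proof.
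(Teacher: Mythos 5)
Your proof is correct, and it uses the same outer scaffolding as the paper (compactness of $(\mathcal{S}(\overline{D}),\sigma)$, Prokhorov to extract a limit point, then invoke Proposition \ref{prop:W-unique} to identify the limit), but the verification that the limit satisfies Properties 1 and 2 of Definition \ref{def:W} proceeds by a genuinely different route. The paper computes directly with the underlying elementary distributions: it observes that the first index $i$ with $x_i \in C$ is geometric with parameter $\tilde\lambda_C$, that $\sum_{j=1}^i \xi_j$ is Erlang, and then manipulates the resulting series as $n\to\infty$ to recover the exponential; Property 2 is handled by an explicit two-variable joint density calculation of the same kind. You instead recognize $\{(x_i, T_i)\}$ with $T_i = \sum_{j\leq i}\xi_j$ as a marked Poisson point process on $\overline{D}\times[0,\infty)$ with intensity $\lambda(x)\,dx\otimes dt$, and then Property 1 is Poisson thinning (restriction to mark set $C$ yields a rate-$\lambda_C$ process whose first atom is $\exp(\lambda_C)$) and Property 2 is independence of restrictions to disjoint mark sets. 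Your treatment of the truncation to $i\leq n$ via $T_n\to\infty$ a.s.\ is the right observation and is not made explicit in the paper, where it is absorbed into the tail term $\sum_{j>n}(1-\tilde\lambda_C)^{j-1}\tilde\lambda_C$. The Poisson-process viewpoint buys conceptual clarity, verifies Property 2 for arbitrarily many sets at once without the ``the case with more than two sets is completely analogous'' remark, and makes transparent the connection to \cite{Pickands71} that the paper mentions only in passing before the proposition; the paper's direct Erlang/geometric calculation is more elementary and self-contained. Both are complete proofs.
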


\begin{proof}
  As in the previous construction, we only need to prove that $W_n$ asymptotically satisfies Properties 1 and 2 from Definition \ref{def:W}. We will do this by direct computation.
  
  For Property 1 in Definition \ref{def:W}, given any closed set $C$, the probability that $x_k$ lies in $C$ is equal to $\bar \lambda^{-1} \int_{C \cap \overline D} \lambda(x)dx =: \tilde \lambda_C$. Thus the first $i$ such that $x_i \in C$ is geometrically distributed with parameter $\tilde \lambda_C$. Furthermore,  $\sum_{j=1}^i \xi_j$ has an Erlang distribution with density $\frac{\bar \lambda^i t^{i-1} e^{-\bar \lambda t}}{(i-1)!}$. In turn
  \begin{displaymath}
    \mathbb{P}\left(\min_{x \in C \cap \overline D} W_n \geq r\right) = \int_r^\infty\sum_{j=1}^n (1-\tilde \lambda_C)^{j-1} \tilde \lambda_C\frac{\bar \lambda^j s^{j-1} e^{-\bar \lambda s}}{(j-1)!} ds + \sum_{j=n+1}^{\infty}(1- \tilde \lambda_{C})^{j-1} \tilde{\lambda}_C,
  \end{displaymath}
  where on the right we are using the fact that the choice of points $x_i$ is independent of the values of the $\xi_j$. Taking a limit as $n \to \infty$ and simplifying the series, we find that
  \begin{displaymath}
    \lim_{n \to \infty} \mathbb{P}\left(\min_{x \in C \cap \overline D} W_n \geq r \right) = \int_r^\infty \tilde \lambda_C \bar \lambda \exp(-\tilde \lambda_C \bar \lambda s) ds.
\end{displaymath}
This proves Property 1.

With regards to Property 2, for $r < s$ we can compute (letting $p_n$ be the associated density function):
\begin{align*}
  p_n(\min_{C_1} W_n &= r, \min_{C_2} W_n = s) = \sum_{k_1=1}^n (1-\tilde \lambda_{C_1} - \tilde \lambda_{C_2})^{k_1-1} \tilde \lambda_{C_1} \frac{\bar \lambda^{k_1}r^{k_1-1}e^{-\bar \lambda r}}{(k_1-1)!} \\
  &\times \sum_{k_2 = k_1+1}^n (1-\tilde \lambda_{C_2})^{k_2-k_1-1} \tilde \lambda_{C_2} \frac{\tilde \lambda^{k_2-k_1}(s-r)^{k_2-k_1-1} e^{-\bar \lambda (s-r)}}{(k_2-k_1-1)!}\\
  &= \sum_{k_1=1}^n (1-\tilde \lambda_{C_1} - \tilde \lambda_{C_2})^{k_1-1} \tilde \lambda_{C_1} \frac{\bar \lambda^{k_1}r^{k_1-1}e^{-\bar \lambda r}}{(k_1-1)!} \\
  &\times \sum_{\tilde k = 1}^{n-k_1} (1-\tilde \lambda_{C_2})^{\tilde k-1} \tilde \lambda_{C_2} \frac{\bar \lambda^{\tilde k}(s-r)^{\tilde k-1} e^{-\bar \lambda (s-r)}}{(\tilde k-1)!}.
\end{align*}
Taking $n \to \infty$ we then obtain
\begin{align*}
  &\lim_{n \to \infty} p_n(\min_{C_1} W_n = r, \min_{C_2} W_n = s) \\
  &= \tilde \lambda_{C_1} \bar \lambda \exp \left( -\bar \lambda r + \bar \lambda r(1-\tilde \lambda_{C_1} - \tilde \lambda_{C_2})\right) \times \tilde \lambda_{C_2} \bar \lambda \exp \left( -\bar \lambda (s-r) + (s-r)\bar \lambda(1-\tilde \lambda_{C_2}) \right)\\
  &= \tilde \lambda_{C_1} \bar \lambda \exp (-\bar \lambda \tilde \lambda_{C_1} r) \times \tilde \lambda_{C_2} \bar \lambda \exp (-\bar \lambda \tilde \lambda_{C_2} s).
\end{align*}
This proves Property 2 in the case of two sets. The case with more than two sets is completely analogous.
\end{proof}

\begin{rem}
  The previous construction highlights the memorylessness of the process $W_\lambda$. That is, the distribution of the $k$-th min can be found by restarting the process after the arrival of the $k-1$-th min. This memorylessness will be very convenient in establishing integral formulas in Section 
  \ref{sec:main-proofs}. However, one cannot expect this property to hold for other extreme value distributions, see Remark \ref{rem:alpha-stable-analog}.
\end{rem}

\begin{rem}
  There are several classical constructions of Brownian motion. Some, such as L\'evy's piecewise linear construction, are easy to visualize but only converge as a measure on the space of continuous functions. Others, such as the ``wavelet-type'' construction of L\'evy-Ciesielski, or the Fourier construction of Weiner, converge uniformly towards Brownian motion (see e.g. Chapter 3 in \cite{Schilling-Brownian-Motion} for more details). Here we only characterize the convergence of the distribution of $W_n$ towards $W_\lambda$ as measures on the space of lower semi-continuous functions. One could seek to demonstrate that certain alternative constructions converges uniformly, in an appropriate metric on the space of lower semi-continuous functions as in Proposition \ref{Prop:metrizable} (see also \cite{DalMaso}, Proposition 10.21). This will be the subject of future analysis. 
\end{rem}

\subsubsection{Existence and uniqueness of $k$-th argmins of $W_\lambda$}

Next we turn to studying the existence of $k$-th argmins for the process $W_\lambda$. We begin with a definition:
\begin{defn}\label{def:k-mins}
  Let $f \in \mathcal{S}(\overline{D})$. We define the minimum value of $f$
\[m_1(f) := \min_{x \in \D} f(x), \]
as well as the set of minimizers of $f$
\[ M_1(f):= \{  x \in \D \: : \: f(x) = m_1(f)  \}.  \]
Moreover, having defined the numbers $m_1(f), \dots, m_{k-1}(f)$ and the sets $M_1(f), \dots, M_{k-1}(f)$ we define
\begin{equation}\label{eqn:def-m_k}  m_k(f) := \inf_{ x \in \D \setminus \cup_{i=1}^{k-1} M_i(f) } f(x)  \end{equation}
and
\[ M_k(f) := \{  x \in \D \: : \:  m_{k-1}(f) <  f(x)= m_{k}(f)    \}.  \]
We refer to the elements of $M_k(f)$ as $k$-th argmins of $f$. 
\end{defn}

\begin{rem}
The set $M_1(f)$ is always non-empty due to the compactness of $\D$ and the lower semi-continuity of $f$. The lower semi-continuity of $f$ also implies that for every $k \in \N$, the set
\[ \bigcup_{i=1}^{k} M_i(f) \]
is a closed set.
\end{rem}

\begin{rem}
In general the sets $M_k(f)$ for $k>1$ may be empty.    For example, it should be clear that a continuous function $f$ does not have $2$-th argmins. In this paper, however, the random object that we consider is almost surely a lower semi-continuous function with well-defined (and unique) $k$-argmins for all $k$ (see Proposition \ref{prop:kmins} below).
\end{rem}

\begin{remark}
It is an easy exercise to show that if $M_k(f)$ is a non-empty set then $M_{k-1}(f) $ is also non-empty.
\end{remark}

The following lemma will be critical in proving the existence of $k$-argmins of $W_\lambda+g$.

\begin{lem}
	\label{lem:kth}
	Let $\{f_n\} \subset \mathcal{S}(\overline{D})$, let $k \in \N$ and suppose that:
	\begin{enumerate}
		\item $f_n \gto f \in \mathcal{S}$.
		\item The set $M_i(f)$ is a singleton for all $i\leq k$.
		\item There exists a number $\beta>0$ such that
		\begin{equation}
		  d(M_i(f_n), M_{j}(f_n)) > \beta, \quad  \forall n \in \N  , \quad \forall  i,j=1, \dots, k+1, \quad i \not =j.
		  \label{eqn:beta}
		\end{equation}  
		In the above, $d(M_i(f_n), M_{j}(f_n))$ is defined as
		\[   d(M_i(f_n), M_{j}(f_n)):= \inf \{ | x-y | \: : \: x\in M_i(f_n), \quad y \in M_{j}(f_n) \}. \]
		
		\item For $n$ sufficiently large $M_i(f_n) \neq \emptyset$, for all $i \leq k+1$.
	\end{enumerate}
	Then all limit points of $\{x_{n}^{k+1}\}$, where $x_{n}^{k+1} \in M_{k+1}(f_n)$, belong to $M_{k+1}(f)$. In particular, $M_{k+1}(f)$ is non-empty.
	\label{lem:k-min-lim}
\end{lem}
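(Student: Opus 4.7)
My plan is to argue by induction on $k$. The base case $k=0$ is exactly Proposition \ref{prop:mainGamma}. For the inductive step, fix any limit point $x^{k+1}$ of $\{x_n^{k+1}\}$ and pass to a subsequence realizing it. Applying the lemma at all lower levels (whose hypotheses are inherited from the present ones), along this subsequence $x_n^i\to x^i$ with $\{x^i\}=M_i(f)$ for each $i\le k$. The separation assumption then gives $|x^{k+1}-x^i|\ge\beta$ for $i\le k$, so $x^{k+1}\notin\bigcup_{i\le k}M_i(f)$ and hence $f(x^{k+1})\ge m_{k+1}(f)$.

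The heart of the argument is the reverse inequality $f(x^{k+1})\le m_{k+1}(f)$, which I would prove by contradiction. Suppose $f(x^{k+1})>m_{k+1}(f)+4\varepsilon$ for some $\varepsilon>0$. Pick $y\in\overline{D}\setminus\{x^1,\dots,x^k\}$ with $f(y)<m_{k+1}(f)+\varepsilon$ and $0<r<\tfrac12\min_{i\le k}|y-x^i|$. By Proposition \ref{prop:sigma-alg} and the open-set half of $\Gamma$-convergence applied to $U=B(y,r)$, there exist $y_n\in U$ with $\limsup_n f_n(y_n)\le f(y)$, while \eqref{eqn:g-conv-def-2} applied to $x_n^{k+1}\to x^{k+1}$ gives $m_{k+1}(f_n)=f_n(x_n^{k+1})>f(x^{k+1})-\varepsilon$ for large $n$. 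Comparing, $f_n(y_n)<m_{k+1}(f_n)$ eventually, so $y_n\in M_{j_0}(f_n)$ for some $j_0\le k$; by pigeonhole I may take $j_0$ fixed along a further subsequence.

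The central difficulty — and the step I expect to be the main obstacle — is that the sets $M_i(f_n)$ are not assumed to be singletons, so a priori $y_n$ need not lie close to $x_n^{j_0}$. To manage this I establish as an auxiliary claim that $m_{j_0}(f_n)\to m_{j_0}(f)$ for every $j_0\le k$: running the recovery sequence \eqref{eqn:g-conv-def} at $x^{j_0}$ confined to $B(x^{j_0},\beta/2)$, separation together with $x_n^{j_0}\to x^{j_0}$ ensures the ball avoids $M_i(f_n)$ for $i\le k+1$, $i\ne j_0$, and $n$ large, so every point of $B(x^{j_0},\beta/2)$ has $f_n$-value $\ge m_{j_0}(f_n)$; this yields $\limsup_n m_{j_0}(f_n)\le m_{j_0}(f)$, the matching $\liminf$ bound being immediate from \eqref{eqn:g-conv-def-2}. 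With $f_n(y_n)=m_{j_0}(f_n)\to m_{j_0}(f)$ in hand, extracting $y_n\to y'$ by compactness and applying \eqref{eqn:g-conv-def-2} yields $f(y')\le m_{j_0}(f)$. Separation forces $y'\ne x^i$ for $i\le k+1$, $i\ne j_0$, in particular $y'\notin\{x^1,\dots,x^{j_0-1}\}$; the singleton hypothesis on $M_{j_0}(f)$ then forces $y'=x^{j_0}$, giving $|y-x^{j_0}|=|y-y'|\le r<\tfrac12|y-x^{j_0}|$, a contradiction. The last detail that $f(x^{k+1})>m_k(f)$ (needed to conclude $x^{k+1}\in M_{k+1}(f)$) is immediate, since equality would put $x^{k+1}\in M_k(f)=\{x^k\}$, again contradicting separation.
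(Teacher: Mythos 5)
Your proof is correct, but it takes a more circuitous route than the paper's. Both arguments begin the same way: establish that $x_n^i\to x^i$ (the unique element of $M_i(f)$) for all $i\le k$ --- you by invoking the lemma inductively at lower levels, the paper by a nested induction inside its own proof --- and then use separation to deduce that any limit point $\tilde x^{k+1}$ of $\{x_n^{k+1}\}$ lies outside $\cup_{i\le k}M_i(f)$, so $f(\tilde x^{k+1})\ge m_{k+1}(f)$. The divergence is in the reverse inequality. The paper argues directly: given any $x\notin\cup_{i\le k}M_i(f)$ and a recovery sequence $x_n\to x$ with $f_n(x_n)\to f(x)$, the strict inequalities $f(x)>m_i(f)=\lim_n f_n(x_n^i)$ for $i\le k$ force $f_n(x_n)>m_i(f_n)$ for large $n$, hence $x_n\notin\cup_{i\le k}M_i(f_n)$, hence $f_n(x_n)\ge m_{k+1}(f_n)=f_n(x_n^{k+1})$, and $f(x)\ge f(\tilde x^{k+1})$ follows in the limit. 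Membership in $\cup_{i\le k}M_i(f_n)$ is thus excluded purely by comparing \emph{values}, so the ``central difficulty'' you flag --- that the competitor $y_n$ might land in a non-singleton $M_{j_0}(f_n)$ far from $x_n^{j_0}$ --- never actually arises in the direct proof. Your contradiction argument handles it correctly via the auxiliary claim $m_{j_0}(f_n)\to m_{j_0}(f)$ (which is true, and is also implicit in the paper, where $f_n(x_n^{j_0})\to m_{j_0}(f)$ is observed) followed by a careful localization of $y_n$ near $x^{j_0}$; this is sound, but the extra machinery is avoidable.
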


\begin{proof}
	
	\textbf{Step 1:} We start by proving the case $k=1$ in order to illustrate the ideas.
	
	Let  $x^1\in M_1(f)$ be the unique minimizer of $f$. By compactness of $\overline{D}$ we know that $\{ x_n^{2} \}_{n \in \N}$ converges up to subsequence towards a point $\tilde{x}^2 \in \overline{D}$; without  loss of generality we assume that the full sequence converges towards $\tilde{x}^2$. We need to show that $\tilde{x}^2\in M_2(f)$. 
	
	First, we claim that $\tilde{x}^2 \not = x^1$, and so $\tilde{x}^2 \notin M_1(f)$. To see this, let $x_n^1$ be a minimizer for $f_n$. By the $\Gamma$-convergence assumption and the fact that $x^1$ is the unique minimizer of $f$, it follows that $ x_n^1\rightarrow x^1.$  
	In particular, for large enough $n$ we have
	\[  |   x_n^1 - x^1| < \beta/2, \]  
	where $\beta$ is as in \eqref{eqn:beta}. 
	From the triangle inequality it follows that for large enough $n$ 
	\[  | x_n^{2} - x^1 | \geq |x_n^1 - x_n^{2} |  - | x_n^1 - x^1|  \geq \beta - \beta/2  = \beta/2.  \]
	Therefore,
	\[ | \tilde{x}^2 - x^1 | \geq \beta/2 >0,\]
	 establishing the claim.
	
	Next, we show that for arbitrary $x \in \overline{D} \setminus M_1(f)   =\overline{D} \setminus \{ x^1\}$ we have that $f(\tilde{x}^2) \le f(x).$ This implies that $\tilde{x}^2 \in M_1(f) \cup M_2(f)$, which together with the above shows that  $\tilde{x}^2 \in M_2(f).$
	Using the lim-sup inequality we can find a sequence $\{ x_n \}_{n \in \N}$ converging towards $x$ for which
	\[ \limsup_{n \rightarrow \infty}  f_n(x_n) \leq f(x). \]
	Notice that for all large enough $n$, $x_n \not = x_n^1$. Indeed, if that was not the case, the limit of $\{ x_n \}_{n \in \N}$ would be $x^1$ which contradicts that $x\neq x^1$. In particular, the value of $f_n(x_n^2) \le f_n(x_n)$.  Hence, 
	\[  f(\tilde{x}^2) \leq \liminf_{n \rightarrow \infty} f_n(x_n^{2}) \leq \limsup_{n \rightarrow \infty} f_n(x_n) \leq f(x)  ,   \]
	where the first inequality follows from the liminf inequality given that $x_n^{2} \rightarrow \tilde{x}^2$.

\textbf{Step 2: }Generalizing to arbitrary $k$ follows the same ideas.

Let, for $i = 1,\dots, k$,  ${x^i}\in M_i(f)$ be the unique $i$-th argmin of $f$ and let, for $i = 1, \dots, k+1,$  ${x_n^i}\in M_i(f_n).$ We assume without loss of generality that ${x_n^i} \to \tilde x^i$ for some $\tilde x^i \in \overline D$. Our goal is to prove that $\tilde x^{k+1}\in M_{k+1}(f)$. 
	
Restricted to the set $B(\tilde x^i, \beta/2)$ we have that  for $n$ sufficiently large ${x_n^i}$ minimizes $f_n$. $\Gamma$-convergence then implies that $f_n({x_n^i}) \to f(\tilde x^i)$. By the definition of $M_i(f_n)$ we have that, for $i<j$, $f_n({x_n^i}) < f_n({x_n^j})$ and hence $f(\tilde x^i) \leq f(\tilde x^j)$. Note that $\tilde x^1 = {x^1}$, simply by $\Gamma$-convergence (on the whole space $\overline D$) and the fact that $M_1(f)$ is a singleton.
	
Now, we claim that for all $i = 1,\dots, k$ we have that $\tilde x^i = x^i$. We proceed by induction. The base case was already proved. Suppose that $\tilde x^i = {x^i}$ for all $i = 1,\dots, j< k$. If $\tilde x^{j+1} \neq {x^{j+1}}$ then we must have (by the fact that $M_i(f)$ is a singleton) that $f(\tilde x^j) < f({x^{j+1}}) < f(\tilde x^{j+1})$. By $\Gamma$-convergence there exists a sequence $x_n \to {x^{j+1}}$ so that $f_n(x_n) \to f({x^{j+1}})$. This then implies that $f_n({x_n^j}) < f_n(x_n) < f_n({x_n^{j+1}})$ for $n$ sufficiently large. This violates the fact that ${x_n^{j+1}} \in M_{j+1}(f_n)$, which then proves the claim.
	
Finally, let $x \in \overline D \backslash \cup_{i=1}^k M_i(f)$ be arbitrary. We will show that $f(\tilde x^{k+1}) \le f(x)$ which implies that $\tilde{x}^{k+1} \in M_{k+1}(f).$  By $\Gamma$-convergence, there exists a sequence $x_n \to x$ with $f_n(x_n) \to f(x)$. Since $x \notin \cup_{i=1}^k M_i(f)$ we have that, for $i = 1,\dots, k,$ $f(x) > f({x^i})$  and so, for $n$ large enough, $f_n(x_n) > f_n({x_n^i})$. Because ${x_n^{k+1}} \in M_{k+1}(f_n)$, this in turn implies that $f_n(x_n) \geq f_n({x_n^{k+1}})$. Taking limits concludes the proof. 
\end{proof}

We are now ready to show the existence and uniqueness of $k$-argmins for $W_\lambda + g$.

\begin{prop}\label{prop:kmins}

Suppose that $g: \overline{D} \rightarrow \R$ is a continuous function. Then with probability one, for every $k \in \N$, $M_k(W_\lambda+g)$ is a singleton. In other words $W_\lambda+g$ has well-defined and unique $k$-th argmins for every $k \in \N$.  
\end{prop}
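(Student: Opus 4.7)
The plan is to proceed by strong induction on $k$, establishing both existence and uniqueness of $M_k(W_\lambda+g)$ for all $k$ simultaneously on a single almost-sure event $\Omega_0$ built from a countable family of null events.

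\textbf{Setup and the master null event.} First I would fix a nested sequence $\{\mathcal{F}_n\}_{n\geq 1}$ of dyadic partitions of a reference cube containing $\bar D$ into closed subcubes with pairwise disjoint interiors and diameters at most $2^{-n}$. Let $\partial$ be the countable union of the boundaries of all cubes in $\bigcup_n\mathcal{F}_n$; this set has Lebesgue measure zero. By Property 1 of Definition \ref{def:W}, $\min_{\partial\cap\bar D}W_\lambda$ is exponential with rate $\int_{\partial\cap\bar D}\lambda\,dx = 0$, hence equal to $+\infty$ almost surely. Moreover, for any two distinct cubes $C,C'\in\mathcal{F}_n$, an approximation by slightly-shrunk interior subcubes (which are truly disjoint closed sets, so Property 2 of Definition \ref{def:W} applies) combined with continuity of $\lambda$ shows that $\min_C W_\lambda$ and $\min_{C'}W_\lambda$ are independent exponentials. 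Adding the deterministic continuous function $g$ preserves independence and absolute continuity of the laws, so $\Prob(\min_C(W_\lambda+g) = \min_{C'}(W_\lambda+g)) = 0$. Let $\Omega_0$ be the intersection of these countably many full-measure events across all $n$ and all pairs of distinct cubes.

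\textbf{Base case and the inductive scheme.} On $\Omega_0$, $M_1(W_\lambda+g)$ is non-empty by lower semicontinuity of $W_\lambda+g$ on the compact set $\bar D$. Uniqueness is immediate: two distinct minimizers $p\neq q$ would lie, for $n$ large, in different cubes $C_p,C_q\in\mathcal{F}_n$, forcing $\min_{C_p}(W_\lambda+g) = \min_{C_q}(W_\lambda+g) = m_1$, contradicting the definition of $\Omega_0$. For the inductive step, suppose on $\Omega_0$ that $M_1=\{x^1\},\ldots, M_{k-1}=\{x^{k-1}\}$. Granting for the moment the strict-gap statement $m_k > m_{k-1}$, both existence and uniqueness of $M_k$ follow: a minimizing sequence $y_j \in \bar D\setminus\{x^1,\ldots,x^{k-1}\}$ with $f(y_j)\to m_k$ (where $f:= W_\lambda+g$) subconverges to some $y^*$; by lower semicontinuity $f(y^*)\leq m_k$, and if $y^* = x^i$ for some $i\leq k-1$, then $f(y^*)=m_i\leq m_{k-1}<m_k$, a contradiction, so $y^*\in M_k$. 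Uniqueness of $M_k$ follows by the same cube-separation argument applied to $p\neq q\in M_k$, choosing $n$ large enough that $C_p,C_q$ are disjoint from each other and from $\{x^1,\ldots,x^{k-1}\}$; then $\min_{C_p}f = f(p) = m_k = f(q) = \min_{C_q}f$, again contradicting $\Omega_0$.

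\textbf{Strict gap and the main obstacle.} The step I expect to be the hard one is establishing $m_k > m_{k-1}$ almost surely. The intuition is the memorylessness highlighted in the remark following Proposition \ref{prop:ConvergenceGamma-alt}: conditional on the pair $(x^{k-1}, m_{k-1})$, the distribution of $W_\lambda$ on the complement of an arbitrarily small ball around $x^{k-1}$ should effectively restart with an independent exponential arrival strictly above $m_{k-1}$. I would make this rigorous by observing that, for each $\epsilon>0$ in a fixed countable dense subset of $(0,\infty)$ and each cube $Q$ in $\bigcup_n\mathcal{F}_n$, the minimum of $W_\lambda$ over the closed set $\bar D\setminus B(q,\epsilon)$ (for $q$ the center of $Q$) is independent of the minimum over $\overline{B(q,\epsilon)}\cap\bar D$, both have continuous laws, and hence are a.s. distinct; after incorporating the shift by the continuous function $g$ and letting $q$ range over these countably many centers, one rules out the scenario $m_k = m_{k-1}$, since $x^{k-1}$ must eventually be contained in one of the shrinking balls. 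The principal technical delicacy, appearing both here and in the setup, is that Definition \ref{def:W} only asserts independence for strictly disjoint closed sets, whereas dyadic cubes only have disjoint interiors; I would resolve this by the shrunken-cube approximation, legitimized by the fact that $\partial$ has $\lambda$-measure zero and thus contributes $+\infty$ to $W_\lambda$ on $\Omega_0$, so mins on overlapping-boundary cubes coincide with mins on their disjoint interior truncations.
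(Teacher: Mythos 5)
Your uniqueness argument via dyadic cubes is essentially the paper's own, and is in fact somewhat more careful: the paper invokes Property 2 of Definition~\ref{def:W} for ``disjoint'' dyadic cubes of the same generation, which can share boundary faces, whereas your shrunken-cube device, justified by the observation that $\min_{\partial} W_\lambda = +\infty$ a.s.\ (the boundary skeleton has Lebesgue measure zero), cleanly closes that gap.

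The existence half of your inductive step, however, is broken. You extract a minimizing sequence $y_j \to y^*$ with $f(y_j) \to m_k$ (for $f = W_\lambda + g$) and assert that $y^* = x^i$ with $i \le k-1$ is a contradiction because $f(y^*) = m_i < m_k$. There is no contradiction: lower semicontinuity only gives $f(y^*) \le \liminf_j f(y_j) = m_k$, which is perfectly consistent with $f(y^*) < m_k$. Concretely, take $\overline D = [0,1]$ and the lower semicontinuous function with $f(0)=0$, $f(1/n) = 1 - 1/n$ for $n\ge 1$, and $f \equiv +\infty$ otherwise. Then $M_1(f) = \{0\}$, $m_1 = 0$, $m_2 = 1 > m_1$ (so the strict gap you aim to prove \emph{holds}), yet $M_2(f) = \emptyset$: the minimizing sequence $1/n$ accumulates at $x^1 = 0$ and the infimum is never attained. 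Thus the strict-gap statement you flag as the main obstacle is not actually sufficient; what must be ruled out is near-optimal points accumulating at earlier argmins, and your plan contains nothing that does this. The paper handles precisely this via Lemma~\ref{lem:kth}: existence is transferred from the discrete approximants $W_n+g$ (whose $k$-argmins trivially exist and are uniformly separated as in \eqref{eqn:beta}) to the $\Gamma$-limit, and that uniform separation hypothesis is what forbids accumulation. To repair your proposal you would need either to import such a $\Gamma$-convergence lemma, or to prove directly that the sublevel sets $\{x : W_\lambda(x) + g(x) \le M\}$ are a.s.\ finite for every $M$, a fact that is transparent for the explicit a.s.\ limit of the construction in Proposition~\ref{prop:ConvergenceGamma-alt} but which does not follow from Definition~\ref{def:W} and compactness alone.
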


\begin{proof}
By Proposition \ref{prop:ConvergenceGamma-alt} we can assume without loss of generality that, with probability one, $W_n$ as defined in \eqref{prop:ConvergenceGamma-alt} $\Gamma$-converges towards $W_\lambda$ . Since $g$ is continuous it follows that, with probability one, 
\[ W_{n}+ g \gto W_\lambda+ g. \]

From the construction of the $W_n$ it is easy to see that, with probability one, the sequence of functions $\{ W_n + g \}_{n \in \N}$ satisfies conditions 1-3 in Lemma \ref{lem:kth}. In order to show the existence of $k$-argmins it is enough to show that with probability one $M_k(W_\lambda+ g)$ cannot have more than two elements (proving in this way existence and uniqueness).

%We first claim that (with probability 1) if $W_\lambda(x) < \infty$ then
%\begin{displaymath}
%\lim_{\e \to 0} \inf_{B(x,\e)\backslash \{ x \}} W_\lambda = \infty.
%\end{displaymath}
%To see this, we compute, using the properties in Definition \ref{def:W},
%\begin{align*}
%\mathbb{P}\left(\inf_{B(x,\e) \backslash \{ x \}} W_\lambda < s\,\Big|\, W_\lambda(x) < \infty\right) = \mathbb{P}\left(\bigcup_{k=1}^\infty \inf_{B(x,\e) \backslash B(x,1/k)} W_\lambda < s\,  \Big| \, W_\lambda(x) < \infty\right) \\
%=\mathbb{P}\left(\bigcup_{k=1}^\infty \inf_{B(x,\e) \backslash B(x,1/k)} W_\lambda < s\right) \leq \exp\left(-s\int_{B(x,\e)} \lambda(y) \,dy\right).
%\end{align*}
%As $\lambda \in L^1(D)$, the term on the right hand side goes to zero. By then letting $s \to \infty$, this proves the claim. In turn, by using the fact that $g$ is lower semi-continuous, we then have that if $W_\lambda + g (x)$ is finite then, with probability 1, $\lim_{\e \to 0}\inf_{B(x,\e)\backslash \{ x \}} W_\lambda +g = \infty$.

We only consider the case $k=1$ as the general case is proved similarly.  Let  $\mathcal{F}_l$ be the family of dyadic cubes $Q$ with side length $2^{-l}$. Suppose that $M_1(W_\lambda + g)$ has at least two elements. Then for some $l \in \N$ we have that $\min_Q W_\lambda + g = \min_{Q'} W_\lambda + g$ for two disjoint $Q,Q' \in \mathcal{F}_l$. However, by Properties 1 and 2 of Definition \ref{def:W} we have that
  \[ \mathbb{P}\left(\bigcup_{l=1}^{\infty}\; \bigcup_{\substack{Q, Q' \in \mathcal{F}_l \\ Q\cap Q' = \emptyset}}  \left\{ \omega \in \Omega \: : \: \min_{x \in Q} W+g = \min_{x \in Q'} W+g  \right\}\right) = 0.\]
This then implies that $M_1(W_\lambda + g)$ must be a singleton (with probability 1).

%Now, suppose that $M_i(W_\lambda + g)$ is a singleton for all $i \leq k$. We claim that $M_{k+1}(W_\lambda + g)$ is non-empty and is a singleton with probability one. To see that $M_{k+1}(W_\lambda + g)$ is non-empty, let $x_j$ be a minimizing sequence in the definition for $m_{k+1}$ (see \eqref{eqn:def-m_k}) . By compactness $x_j$ must have a limit point (more precisely up to subsequence $x_j$ converges towards a point $\tilde{x}$). However, by the previous claim, with probability one that limit point cannot be in $M_i(W_\lambda+g)$ for any $i\leq k$. By then using lower semi-continuity one immediately has that the limit point is an element of $M_{k+1}(W_\lambda+g)$. Using the same argument as in the case of $M_1$ one can then show that with probability one $M_{k+1}(W_\lambda+g)$ must be a singleton. By induction the proposition is now proved.
%

 \end{proof}

 \begin{rem}
   We have used the fact that argmins are locally isolated in a very strong way. Ideas of this type have previously been applied in the context of Cahn-Hilliard phase transitions, see e.g. \cite{KohnSternberg}.
 \end{rem}

 \section{Proof of the main results}\label{sec:main-proofs}
This section contains the proofs of the main results of the paper, Theorems  \ref{thm:main} and \ref{thm:main2}

\subsection{Proof of Theorem \ref{thm:main}}
We already showed in Proposition \ref{prop:kmins} that $W_\lambda + g$ has well-defined $k$-th argmins. Our aim now is to establish equation \ref{eqn:argmin-dist}, that gives a formula for their joint distribution. We will consider first the case $k=1$ in Lemma \ref{thm:main1}, and then extend the proof to the case $k> 1.$

Before delving into the proofs we recall some notation ---see equation \eqref{eqn:H-def}. We denote by $\mu_{\lambda}$ the probability measure on $\overline{D}$ with density proportional to $\lambda$, 
\[ \frac{d\mu_\lambda(x)}{dx} \propto \lambda(x) , \quad x \in \overline{D}.\]
The normalizing constant is denoted by $\bar{\lambda},$ that is, 
\[ \overline{\lambda}:= \int_{D}\lambda(x) dx.\]
We also recall the definition of the cumulative distribution function
\[ H_{\lambda,g}(s) := \mu_{\lambda} \left(  \{  x \in D \: : \:  g(x) \leq s   \}   \right). \]
Henceforth we will omit the dependence of $\mu_\lambda$  and $H_{\lambda,g}$ on $\lambda$ and $g$ from the notation, and write simply $\mu$ and $H.$

%\blue
%  \begin{lem}  \label{thm:main1}
%    Let $g \in \mathcal{S}(\overline{D})$, and let $\Xone$, $\tauone$ be random variables representing the global minimizer and minimum value of $W_\lambda + g$. Then the joint density of these random variables is given by
%    \begin{equation}
%      \frac{\Prob(\Xone \in dx,\tauone\in d\tau )}{dx d\tau} = \Psi(x,\tau,g) = \frac{\chi_{\tau > g(x)}}{\bar \lambda} \exp\left(-\bar \lambda^{-1} \int_{-\infty}^{\tau}H(s) \,ds \right).
%      \label{eqn:joint-distribution-1-min}
%    \end{equation}
%    In turn the density of $\Xone$ is given by
%    \begin{equation}
%\frac{\Prob(\Xone \in dx) }{dx} = \Phi(x,g) = \int_{g(x)}^\infty \bar \lambda^{-1} \exp \left( -\bar \lambda^{-1} \int_{-\infty}^{t}H(s) \,ds \right) \,dt.
%      \label{eqn:1-min-dist}
%    \end{equation}
%
%    \end{lem}
%\nc    

      \begin{lem}  \label{thm:main1}

    Let $g \in \mathcal{S}(\overline{D})$, and let $\Xone$, $\tauone$ be random variables representing the global minimizer and minimum value of $W_\lambda + g$. Then the joint density of these random variables is given by
    \begin{equation}
      \frac{\Prob(\Xone \in dx,\tauone\in d\tau )}{dx d\tau} = \Psi(x,\tau,g) = \chi_{\tau > g(x)} \exp\left(-\bar \lambda \int_{-\infty}^{\tau}H(s) \,ds \right).
      \label{eqn:joint-distribution-1-min}
    \end{equation}
    In turn the density of $\Xone$ is given by
    \begin{equation}
\frac{\Prob(\Xone \in dx) }{dx} = \Phi(x,g) = \int_{g(x)}^\infty  \exp \left( -\bar \lambda \int_{-\infty}^{t}H(s) \,ds \right) \,dt.
      \label{eqn:1-min-dist}
    \end{equation}

    \end{lem}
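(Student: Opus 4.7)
The plan is to derive the joint density directly from the two defining features of $W_\lambda$ in Definition \ref{def:W}: infima of $W_\lambda$ over closed sets are exponential with the natural rate, and infima over disjoint closed sets are independent. From these I would obtain the distribution by a localize-and-differentiate argument.

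First I would compute the tail $\Prob(\tauone > \tau)$. Cover $\overline D$ by small closed dyadic cubes $\{C_k\}$ and let $g_k$ be a piecewise-constant approximation of $g$ on $C_k$. By Property 2 of Definition \ref{def:W} the infima $\min_{C_k} W_\lambda$ are independent across $k$; by Property 1 each is exponentially distributed with rate $\int_{C_k} \lambda(x)\,dx$. Since $W_\lambda \geq 0$, this gives
\[
\Prob\bigl(\min_{C_k}(W_\lambda + g) > \tau\bigr) \approx \exp\Bigl(-\int_{C_k} \lambda(x)\,(\tau - g_k)^+ \,dx\Bigr),
\]
and multiplying over $k$ and passing to the mesh-zero limit produces $\Prob(\tauone > \tau) = \exp(-\int_{\overline D} \lambda(x)(\tau - g(x))^+ \,dx)$. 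Fubini and the definition of $H$ then rewrite this as $\exp(-\bar\lambda \int_{-\infty}^\tau H(s)\,ds)$.

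Next I would localize to capture the argmin. Fix a small closed neighborhood $C$ of $x$ and set $A := \min_C(W_\lambda + g)$ and $B := \min_{\overline D \setminus C}(W_\lambda + g)$. These are independent by Property 2, and the event $\{\Xone \in C,\ \tauone > \tau\}$ coincides with $\{A > \tau,\ A < B\}$. Expanding the previous formula to leading order in $|C|$ yields a density $p_A(s) \approx |C|\,\lambda(x)\,\chi_{g(x)<s}$, while $\Prob(B > s) \to \exp(-\bar\lambda \int_{-\infty}^s H(r)\,dr)$ as $|C| \to 0$. Combining these via independence,
\[
\frac{\Prob(\Xone \in C,\ \tauone > \tau)}{|C|} \;\longrightarrow\; \lambda(x) \int_{\max(\tau,\,g(x))}^\infty \exp\Bigl(-\bar\lambda \int_{-\infty}^s H(r)\,dr\Bigr) ds.
\]
Differentiating in $\tau$ gives \eqref{eqn:joint-distribution-1-min}, and integrating the result in $\tau$ gives the marginal \eqref{eqn:1-min-dist}.

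The main technical obstacle is justifying the passage from a piecewise-constant $g$ to a general lower-semicontinuous $g$ and controlling the shrink-the-ball step. Lower semicontinuity of $g$ permits monotone approximation from below, so that monotone convergence handles the exponents; the Lebesgue-differentiation step is standard wherever $\lambda$ is continuous. A possibly cleaner alternative would be to leverage the Poisson-point-process picture behind Proposition \ref{prop:ConvergenceGamma-alt}: $W_\lambda$ can be realized as the lower envelope of a Poisson process on $\overline D \times [0,\infty)$ of intensity $\lambda(x)\,dx \otimes dt$, so that $W_\lambda + g$ is the lower envelope of the same Poisson process restricted to $\{(x,\tau):\tau > g(x)\}$; the density of the atom with lowest $\tau$-coordinate is then a direct textbook computation, bypassing the discretization argument entirely.
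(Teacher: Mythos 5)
Your route is genuinely different from the paper's. The paper constructs an explicit finite approximation $\bar f_n$ (exponentials attached to a deterministic point cloud), computes the joint law of the discrete $(\Xone_n,\tauone_n)$ by a product integral, rewrites it with a layer-cake decomposition, and then passes to $n\to\infty$ using $\Gamma$-convergence (Propositions~\ref{ConvergenceGamma} and~\ref{prop:mainGamma}). You instead argue directly from Definition~\ref{def:W}: compute the tail of $\tauone$ by a piecewise-constant cube decomposition, localize on a shrinking closed neighborhood $C\ni x$, and use independence of $\min_C$ and $\min_{\overline D\setminus C}$. Both are legitimate; yours avoids introducing the approximating sequence but requires the extra measure-theoretic steps you partly flag: upgrading independence of $\min$ over disjoint closed sets (Property~2) to independence of the restricted processes $W_\lambda|_C$ and $W_\lambda|_{\overline D\setminus C}$ (a $\pi$--$\lambda$ argument, as in Proposition~\ref{prop:W-unique}); handling the fact that $\overline D\setminus C$ is not closed (approximate from inside by closed sets); and a Lebesgue-differentiation step as $|C|\to 0$, which for general lower semicontinuous $g$ gives the formula only for a.e.\ $(x,\tau)$ — which is all a density statement needs. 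The Poisson-point-process route you mention last is indeed the cleanest realization of this idea.

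There is one discrepancy you should not pass over silently. The density your localization actually produces,
\[
\lambda(x)\,\chi_{\tau>g(x)}\exp\Bigl(-\bar\lambda\int_{-\infty}^\tau H(s)\,ds\Bigr),
\]
carries an explicit factor $\lambda(x)$ that is absent from the stated formula \eqref{eqn:joint-distribution-1-min}, so ``differentiating in $\tau$ gives \eqref{eqn:joint-distribution-1-min}'' is not accurate as written: you have derived something else. Your version is the correct one. It is the unique normalization under which the density integrates to $1$: with $g\equiv 0$ one has $H(s)=\chi_{s\geq 0}$ and $\int_D\int_0^\infty\lambda(x)e^{-\bar\lambda\tau}\,d\tau\,dx=1$, whereas \eqref{eqn:joint-distribution-1-min} as printed integrates to $|D|/\bar\lambda$. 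It also yields the expected marginal for $\Xone$ (density $\lambda/\bar\lambda$ when $g\equiv 0$), and it is the version implicitly used in the worked example of Section~\ref{sec:comp-ex}, where the prefactor $2/\delta$ in $\rho_\delta$ comes precisely from $\lambda\equiv 1/\delta$. So the omitted $\lambda(x)$ is a slip in the statement (and propagates to $\Phi$ and \eqref{eqn:1-min-dist}); you should record that your derivation supplies it and flag the mismatch rather than asserting agreement.
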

    \nc
    \begin{proof}
Let $\{ x_1, x_2, \dots \} $ be a countable set of points in $D$ that we use to construct discrete approximations of $\mu$ and $H$. Precisely, we set, for every $n \in \N,$  $$\mu_n:= \frac{1}{n}\sum_{i=1}^{n} \delta_{x_i},  \quad \quad H_n(\alpha) := \mu_n \left( \{ x \in D \: : \: g(x)\leq \alpha\} \right). $$
The points $\{x_1, x_2, \dots \}$ can be chosen so that $\mu_n$ converges weakly towards $\mu$, and $H_n$ converges uniformly towards $H$.

Let $\{\xi_1, \xi_2, \dots \}$ be a sequence of i.i.d. exponential random variables with rate one. Fix $n \in \N$ and consider the (random) function

%\[  \bar{f}_n(x) := \begin{cases}   n \overline{\lambda} \xi_i + g(x_i) & \text{ if } x=x_i, \quad i=1, \dots, n \\ +\infty & \text{otherwise. }       \end{cases} \]
\[  \bar{f}_n(x) := \begin{cases}   \frac{n}{ \overline{\lambda}} \xi_i + g(x_i), & \text{ if } x=x_i, \quad i=1, \dots, n, \\ +\infty, & \text{otherwise. }       \end{cases} \]
Let $\Xone_n$ and $\tauone_n$ denote the minimizer and minimum of the function $\bar{f}_n$. We may compute, for $r > 0$,

\begin{align*}
&\Prob\left(\Xone_n  = x_j, \tauone_n = r + g(x_j)\right) = \Prob\left(  \frac{n}{\bar{\lambda}} \xi_j = r, \frac{n}{\bar{\lambda}}\,\xi_j \leq \frac{n}{\bar{\lambda}}\xi_i + (g(x_i) - g(x_j))\; \forall i\right) \\
&= \Prob\left(\xi_j = r\frac{\bar{\lambda}}{n},\,\, \xi_i>\frac{\bar{\lambda}}{n}\bigl(r + g(x_j) - g(x_i)  \bigr) \right) \\
&= \exp\left( -r \frac{\bar{\lambda}}{n} \right)  \int_{\frac{\bar{\lambda}}{n} (r - (g(x_1)  - g(x_j)) )_+}  \cdots \int_{\frac{\bar{\lambda}}{n} (r- (g(x_n)  - g(x_j)) )_+}  \prod_{i \neq j} \exp(  - r_i )    \,dr_i \\
& =\exp\left( -r \frac{\bar{\lambda}}{n} \right)  \exp\left(\frac{\bar{\lambda}}{n} \sum_{g(x_i) < r + g(x_j)}^{}\bigl(g(x_i) - g(x_j) - r \bigr) \right).
\end{align*}
Using the layer cake decomposition we may write
 \begin{displaymath}
   \Prob(\Xone_n = x_j, \tauone_n = r + g(x_j)) = \exp\left( -r \frac{\bar{\lambda}}{n} \right) \exp\left( -\bar \lambda \int_{-\infty}^{r + g(x_j)} H_n(s) \,ds \right).
 \end{displaymath}
 Replacing $r + g(x_j)$ with $\tauone_n$ and letting $n \to \infty$ we obtain
 \begin{equation}\label{eqn:x-t-density-b}
   \frac{\Prob(\Xone \in dx , \tauone \in d \tau)}{dx d\tau} = \Psi(x,\tau,g) := \chi_{\tau > g(x)} \exp\left( -\bar \lambda \int_{-\infty}^{\tau} H(s) \,ds \right).
 \end{equation}
Now taking the integral in $\tau$ gives
 \begin{displaymath}
   \frac{\Prob(\Xone \in dx )}{dx} = \Phi(x,g) := \int_{g(x)}^\infty  \exp\left( -\bar \lambda \int_{-\infty}^{t} H(s) \,ds \right)\,dt.
 \end{displaymath}
Proposition \ref{ConvergenceGamma} implies that, with probability one, $f_n \gto W_\lambda + g$. This in turn implies the desired result thanks to Proposition \ref{prop:mainGamma}.

    \end{proof}
    We are now ready to establish the formulas for the joint distribution of the first $k$-th argmins.

\begin{proof}[Proof of Theorem \ref{thm:main}:]
We complete the proof of Theorem \ref{thm:main} by obtaining the joint distribution of $(\Xone, \dots, \Xk)$. 

We first observe that
\begin{align*}
  \Prob(\Xii \in A | \Xone , \ldots \Ximone, \tauone, \ldots,\tauimone) &= \Prob(\Xii \in A | \tauimone),\\
  \Prob(\Xii \in A, \taui \in B| \Xone, \ldots, \Ximone, \tauone,\ldots, \tauimone) &= \Prob(\Xii \in A, \taui \in B| \tauimone).
\end{align*}

Bayes' rule then implies that
\begin{equation}
  \frac{\Prob(\Xone \in dx_1, \dots, \Xk \in dx_k) }{dx_1\dots dx_k} = \int_{\R^{k-1}} \frac{\Prob(\Xk | \taukmone)}{dx_k} \frac{\Prob(\Xone,\tauone)}{dx_1 dr_1} \prod_{j=2}^{k-1} \frac{\Prob(\Xj,\tauj|\taujmone)}{dx_j dr_j} \,dr_1\dots\,dr_{k-1}.
\end{equation}
Here we use $r_i$ as a dummy variable to avoid overloading notation for $\tau$. Using the fact that exponentials are memoryless (along with the construction in Proposition \ref{ConvergenceGamma} and the convergence in the previous proof) we then obtain
\begin{align}
  \frac{\Prob(\Xone \in dx_1, \dots, \Xk \in dx_k )}{dx_1 \dots dx_k} = &\int_{\R^{k-1}} \Phi( x_k,(g-r_{k-1})^+) \Psi(x_1,r_1,g) \\
  &\times \prod_{j=2}^{k-1} \Psi(x_j,r_j-r_{j-1},(g-r_{j-1})^+) \,dr_1\dots\,dr_{k-1}.
\end{align}
This matches the formula in Theorem \ref{thm:main} and completes the proof.

\end{proof}

\subsection{ Proof of Theorem \ref{thm:main2}}
\begin{proof}[Proof of Theorem \ref{thm:main2}]
To begin with, we provide the (classical) proof that
\begin{displaymath}
  n \min\{ \xi_1 \dots \xi_n \} \to^d \exp(1).
\end{displaymath}
Define $Y_n := n \min\{\xi_1 \dots \xi_n\}$. Then
\begin{displaymath}
  \mathbb{P}(Y_n > t) = \mathbb{P}(\xi_1 > t/n,\dots,\xi_n > t/n).
\end{displaymath}
By independence 
\begin{displaymath}
  \mathbb{P}(Y_n > t) = \prod_{j=1}^n \bigl(1-F_j(t/n)\bigr).
\end{displaymath}
Therefore, using hypothesis \eqref{eqn:first-order-dist}, 
\begin{displaymath}
  \lim_{n \to \infty} \mathbb{P}(Y_n > t) = e^{-t},
\end{displaymath}
proving the claim.

We then remark that the exact same proof holds if we only consider subsets of the sample points (e.g. $\{ x_i \in C\}$). This, along with independence of the $\xi_i$ implies that, for $f_n$ as in Theorem \ref{thm:main2}, we have that $nf_n \to W_\lambda + g$, where this convergence is in $\mathcal{P}(\mathcal{S})$.

The formula for the limiting joint distribution of the argmins then follows as in the proof of Lemma \ref{thm:main1}.

\end{proof}

\begin{rem}
 We remark that convergence in $\mathcal{P}(\mathcal{S})$ does not necessarily imply convergence in distribution of the sets $M_k$ ( except for the case $k=1$). This is because small scale features may be lost in the limiting procedure at the level of $k$-argmins. However, the ``pointwise'' construction that we use here preserves the convergence of $k$-argmins, since all argmins are locally isolated.
\end{rem}

\section{Computational example on $D=(0,1)$}\label{sec:comp-ex}

We illustrate some of our results by considering a family of distributions obtained from computing our ``stochastic integrals'' on the unit interval $[0,1]$.  Consider the function
\[ g(x):= x^2.\]

Obviously the distribution of the minimizers of $g$ is the Dirac delta measure at zero. Let us now consider the random function
\[ x \mapsto \delta W_\lambda(x)+ g(x),\]
for some parameter $\delta>0$, and for $\lambda \equiv 1$. The distribution of the minimizers of $\delta W_\lambda + g$ will naturally depend on the value of $\delta$ and is expected to be close to a Dirac delta measure for small values of $\delta$, whereas is expected to be essentially uniform for large values of $\delta$. We will then interpret the distributions of minimizers of $\delta W_\lambda + g$ as regularizations of the Dirac delta at zero.

We can compute explicitly the densities for the distribution of minimizers using \eqref{eqn:argmin-dist}. Indeed the distribution of the minimizer of $\delta W_\lambda + g$ is given by
\[   \rho_\delta(y) :=  \frac{2}{\delta}\int_{y}^1 x \exp \left( - \frac{2x^3}{3\delta} \right) dx  + \exp\left(-\frac{2\delta}{3} \right), \quad y \in [0,1].    \] 
Figure \ref{fig:Densities} plots this density for several representative values of $\delta$. 
\begin{figure}
  {\centering \includegraphics[width=.8\textwidth]{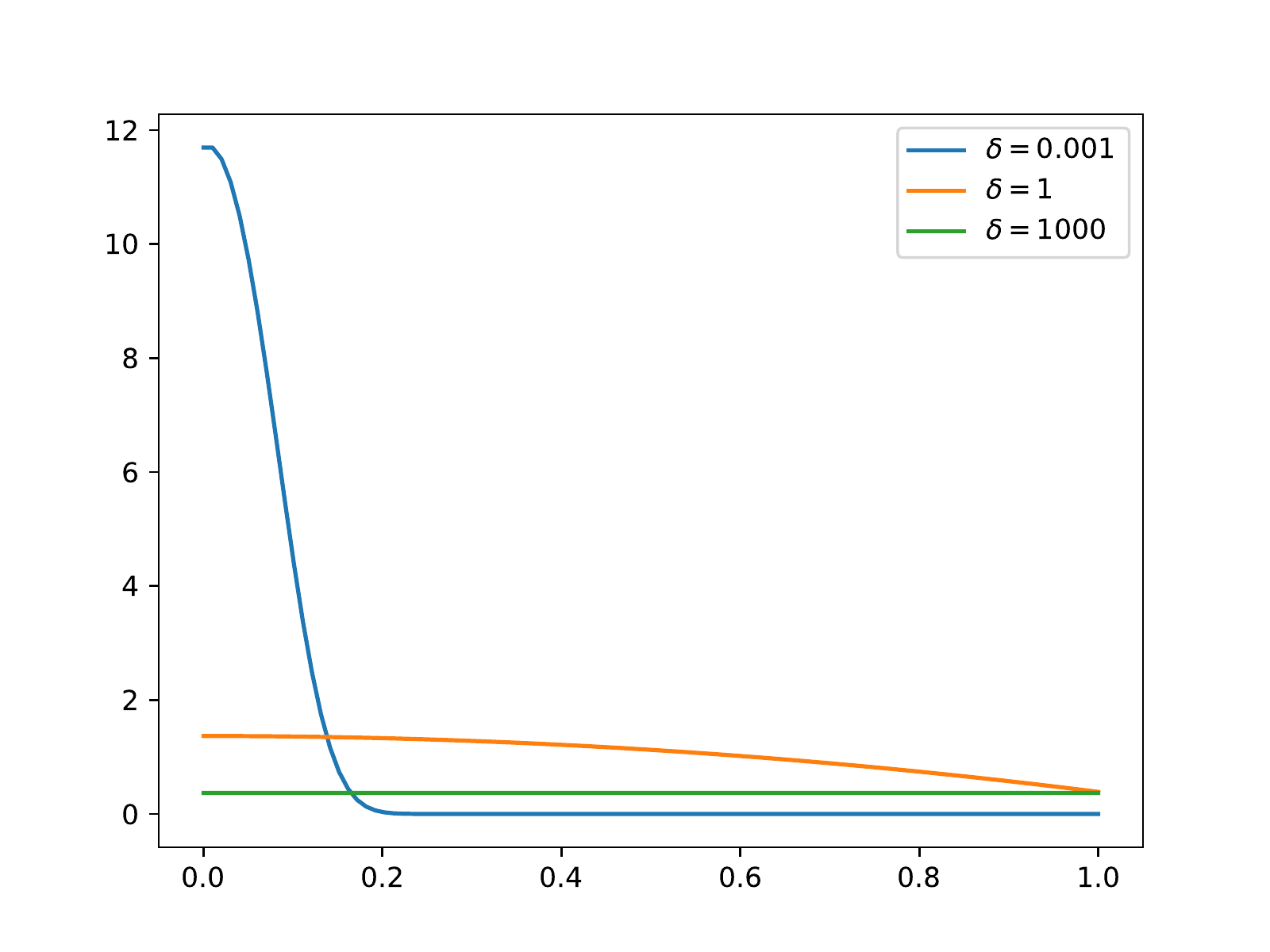}
  \caption{The distribution of minimizers of $x^2 + \delta W_\lambda$, for different values of $\delta$.} \label{fig:Densities}}
\end{figure}

Our results give us information of \textit{where} the minimizer is located. In particular it allows us to quantify the likelihood to find a minimizer of the process in any given subregion of $D$.

\appendix

  \bibliographystyle{alpha}
\bibliography{expbib}

\end{document}